\newtheorem{theorem}{Theorem}[section]
\newtheorem{prop}[theorem]{Proposition}
\theoremstyle{definition}
\newtheorem{defi}[theorem]{Definition}
\theoremstyle{remark}
\theoremstyle{remark}
\newtheorem{remark}[theorem]{Remark}
\numberwithin{equation}{section}
\newcommand{\PP}{\mathbb{P}}
\newcommand{\RR}{\mathbb{R}}
\newcommand{\rat}{{\mathbb Q}}
\newcommand{\cov}{\operatorname{Cov}}
\newcommand{\varr}{\operatorname{Var}}
\newcommand{\dd}{\mathrm{d}}
\newcommand{\ee}{\mathrm{e}}
\newcommand{\cA}{{\mathcal A}}
\newcommand{\cB}{{\mathcal B}}
\newcommand{\cD}{{\mathcal D}}
\newcommand{\cL}{{\mathcal L}}
\newcommand{\EE}{\operatorname{\mathbb{E}}}
\newcommand{\var}{\operatorname{Var}}
\newcommand{\proofend}{\hfill\mbox{$\Box$}}
\newcommand{\distre}{\stackrel{\cD}{=}}
\begin{document}
\sloppy
\title[Gauss-Markov processes as scaled Ornstein-Uhlenbeck processes]
 {Gauss-Markov processes as space-time scaled stationary Ornstein-Uhlenbeck processes}

\author{M\'aty\'as Barczy}
\address{M\'aty\'as Barczy, MTA-SZTE Analysis and Stochastics Research Group,
 Bolyai Institute, University of Szeged, Aradi v\'ertan\'uk tere 1, H--6720 Szeged, Hungary.}
\email{barczy\@@{}math.u-szeged.hu}
\thanks{M\'aty\'as Barczy is supported by the J\'anos Bolyai Research Scholarship of the Hungarian Academy of Sciences.}

\author{Peter Kern}
\address{Peter Kern, Mathematisches Institut, Heinrich-Heine-Universit\"at D\"usseldorf, Universit\"atsstr. 1, D-40225 D\"usseldorf, Germany}
\email{Peter.Kern\@@{}uni-duesseldorf.de}

\date{\today}

\begin{abstract}
We present a class of Gauss-Markov processes which can be represented as space-time scaled
 stationary Ornstein-Uhlenbeck processes defined on the real line.
We give several explicit examples of the representation for certain Gauss bridge processes.
As an application, we derive a formula for the density function of the supremum location of certain standardized
 Gauss-Markov processes on compact time intervals.
We also present some sufficient conditions under which mean centered Gauss-Markov processes take zero at a fixed time
 with probability one.
\end{abstract}

\keywords{Gauss-Markov process, stationary Ornstein-Uhlenbeck process, Wiener bridge, supremum location.}

\subjclass[2010]{Primary 60G15; Secondary 60G10, 60J65}

\maketitle

\baselineskip=18pt

\section{Introduction}

In this paper, we present a class of Gauss-Markov processes which
 can be represented as space-time scaled stationary Ornstein-Uhlenbeck processes defined on the real line
 by specifying the space and time transformations in question explicitly as well.
We will give several examples, and we will use this representation for determining the distribution
 of the supremum location of certain standardized Gauss-Markov processes on compact time intervals.
To motivate our method, we first recall the well-known facts that a Wiener bridge can be represented
 as a space-time scaled stationary Ornstein-Uhlenbeck process (see, e.g., Shorack and Wellner \cite[Exercise 10, page 32]{ShoWel}) and
 that the supremum location of a Wiener bridge on the interval $[0,1]$ is uniformly distributed (see, e.g., Ferger \cite[Section 2]{Fer}).
Let $(B_t)_{t\geq 0}$ be a standard Wiener process, then its Lamperti transform (see Lamperti \cite[page 64]{Lam})
 \begin{align}\label{OU_transform}
    S_t:=\ee^{-\frac{t}{2}} B_{\ee^t},\qquad t\in\RR,
 \end{align}
 defines a strictly stationary centered Gauss process $S=(S_t)_{t\in\RR}$ defined on the real line with
 \begin{align}\label{cov_stac_OU}
   \cov(S_s,S_t) = \ee^{-\frac{\vert t-s\vert}{2}}, \qquad s,t\in\RR,
 \end{align}
 see, e.g., Doob \cite{Doo} or Shorack and Wellner \cite[Exercise 9, page 32]{ShoWel}.
The process $S$ is known as a stationary Ornstein-Uhlenbeck process defined on $\RR$.
Then a Wiener bridge from $0$ to $0$ over the time interval $[0,1]$
 generates the same law on $(C([0,1]), \cB(C([0,1])))$ as the space-time scaled stationary Ornstein-Uhlenbeck process
 \[
   U_t
    :=\begin{cases}
       \sqrt{t(1-t)}S\left(\ln \left(\frac{t}{1-t}\right)\right)
           & \text{if $t\in(0,1)$},\\
        0  & \text{if \ $t=0$ \ or \ $t=1$,}
     \end{cases}
 \]
 see, e.g., Shorack and Wellner \cite[Exercise 10, page 32]{ShoWel},
 where for a subset $D\subset[0,\infty)$, $C(D)$ denotes the space of continuous real-valued functions defined on $D$
 and $\cB(C(D))$ is the Borel $\sigma$-algebra on $C(D)$.
Recall that the law of the pathwise unique strong solution of the stochastic differential equation (SDE)
 \[
   \dd Z_t = -\frac{1}{1-t} Z_t\,\dd t + \dd B_t,\qquad t\in[0,1),
 \]
 with an initial value $Z_0=0$ coincides with that of the Wiener bridge from $0$ to $0$ over the time interval $[0,1]$.

As a generalization of the observation above, first in Section \ref{section_gen_fram} we provide a class of Gauss-Markov processes
 (satisfying a linear SDE) which can be represented as space-time scaled stationary Ornstein-Uhlenbeck processes defined
 on the real line by specifying the space and time transformations in question explicitly, see Theorem \ref{Thm_General}.
Then we derive a formula for the density function of the supremum location of the standardized version of the Gauss-Markov processes
  in question on compact time intervals, see Section \ref{section_application}.
A partial converse of Theorem \ref{Thm_General} is given in Proposition \ref{Prop_General_reversed}.
Further, we present some sufficient conditions under which the mean centered Gauss-Markov process in question take zero at a fixed time
 with probability one, see Proposition \ref{Prop_General}.
In Section \ref{sec_examples}, we give some examples: scaled Wiener bridges (also called general $\alpha$-Wiener bridges),
 Ornstein-Uhlenbeck type bridges, weighted Wiener bridges and so called $F$-Wiener bridges.
In Section \ref{section_counterexamples} we present some counterexamples where the representation given in Theorem \ref{Thm_General} does not hold
 such as the zero area Wiener bridge.
All the proofs are presented in Appendix \ref{App_proofs}.

\section{A general framework}\label{section_gen_fram}

In what follows, let $\rat$ and $\RR_+$ denote the set of rational and non-negative real numbers, respectively.
For $s,t\in\RR$, let $s\wedge t$ denote $\min(s,t)$, and let $\cB(\RR)$ denote the set of Borel sets of $\RR$.
Recall that $C([0,T])$ with $T\in(0,\infty)$, and $C([0,\infty))$ are complete, separable metric spaces
 (with appropriate metrics).
Due to the strictly increasing and continuous time change $\frac{2T}{\pi}\arctan s$, $s\in[0,\infty)$
 (which is a bijection between $[0,\infty)$ and $[0,T)$), we get $C([0,T))$ is a
 complete, separable metric space as well.
For a stochastic process $X$, the time dependence of $X$ will be denoted by $X_t$
 or $X(t)$ depending on how complicated (large sized) the expression replacing $t$ is.

Let $T\in (0,\infty]$.
Let $\phi:[0,T)\to (0,\infty)$ be a continuously differentiable function with $\phi(0)=1$,
 $\psi, \sigma:[0,T) \to\RR$ be continuous functions, and
 suppose that $\sigma(t)\ne 0$ on some interval $(0,\delta)$ for some $\delta\in(0,T]$.
Let us consider the SDE
 \begin{align}\label{SDE_general}
   \dd Z_t = \left(\frac{\phi'(t)}{\phi(t)} Z_t + \psi(t)\right)\dd t + \sigma(t) \dd B_t, \qquad t\in[0,T),
 \end{align}
 with a non-random initial value $Z_0=\xi\in\RR$, where $(B_t)_{t\in\RR_+}$ is a standard Wiener process
 on a probability space $(\Omega,\cA,\PP)$.
Note that in the drift coefficient of the SDE \eqref{SDE_general} the factor $\phi'(t)/\phi(t)$ can be an arbitrary
 continuous function $f:\RR_+\to\RR$, since the Cauchy problem $\frac{\phi'(t)}{\phi(t)}= f(t)$, $t\in\RR_+$,
 with $\phi(0)=1$ has the unique solution
 \[
   \phi(t) = \exp\left\{\int_0^t f(u)\,\dd u\right\}, \qquad t\in\RR_+.
 \]
However, we keep the form $\phi'(t)/\phi(t)$ in the SDE \eqref{SDE_general} in order to have a more compact presentation.

Since $\phi'/\phi$, $\psi$ and $\sigma$ are non-random, measurable and locally bounded, by using It\^{o}'s formula,
 \begin{align}\label{gen_SDE_solution}
  Z_t = \phi(t) \left(\xi + \int_0^t\frac{\psi(u)}{\phi(u)}\,\dd u
                      + \int_0^t \frac{\sigma(u)}{\phi(u)}\,\dd B_u\right),\qquad t\in[0,T),
 \end{align}
 can be shown to be the pathwise unique strong solution of the SDE \eqref{SDE_general}.
The Gauss-Markov process $Z$ is called a process of Ornstein-Uhlenbeck type with parameters
 $\phi$, $\psi$ and $\sigma$ in Patie \cite[pages 49 and 58]{Pat}.
We also note that processes of the form \eqref{gen_SDE_solution} have been recently applied
 in Alili and Patie \cite[second proof of Theorem 1.2]{AliPat}.

One can easily calculate
 \begin{align*}
   \cov(Z_s,Z_t)
    = \phi(s)\phi(t)\int_0^{s\wedge t} \frac{\sigma(u)^2}{\phi(u)^2}\,\dd u,
    \qquad s,t\in[0,T).
 \end{align*}

Let us consider the mean centered process defined by
 \begin{align}\label{help_centered}
   \widetilde Z_t
           := Z_t - \EE(Z_t)
            = \phi(t)\int_0^t \frac{\sigma(u)}{\phi(u)}\,\dd B_u,
                 \qquad t\in[0,T).
 \end{align}

As a consequence of Theorem 4.1 in Lachout \cite{Lac1} we have the following representation theorem.
For completeness, in Appendix \ref{App_proofs} we present two proofs of it: the first one
 is based on Theorem 4.1 in Lachout \cite{Lac1} by checking its conditions, the second one is a direct one based
 on Dambis, Dubins and Schwarz lemma.

\begin{theorem}\label{Thm_General}
There exists a strictly stationary centered Ornstein-Uhlenbeck process $R=(R_t)_{t\in\RR}$
 with $\cov(R_s,R_t) = \ee^{-\frac{\vert t-s\vert}{2}}$, $s,t\in\RR$, such that
 \begin{align}\label{help_main_transform}
     \widetilde Z_t
          = \phi(t)\sqrt{\int_0^t \frac{\sigma(u)^2}{\phi(u)^2}\,\dd u}\,
            \,R\left(\ln\left( \int_0^t \frac{\sigma(u)^2}{\phi(u)^2}\,\dd u \right)\right),
            \qquad  \forall\; t\in[0,T) \quad \text{a.s.,}
 \end{align}
 where $\widetilde Z$ is defined in \eqref{help_centered}, and
 the right hand side of \eqref{help_main_transform} for $t=0$ is understood as an almost sure
 limit as $t\downarrow 0$.
Roughly speaking, the mean centered process $(\widetilde Z_t)_{t\in[0,T)}$ coincides
 with a space-time scaled stationary Ornstein-Uhlenbeck process almost surely.
Further, if $\PP(\widetilde Z_T:=\lim_{t\uparrow T} \widetilde Z_t = 0)=1$, then
 \begin{align}\label{rep_closed}
     \widetilde Z_t
          = \phi(t)\sqrt{\int_0^t \frac{\sigma(u)^2}{\phi(u)^2}\,\dd u}\,
            \,R\left(\ln\left( \int_0^t \frac{\sigma(u)^2}{\phi(u)^2}\,\dd u \right)\right),
            \qquad \forall\; t\in[0,T] \quad \text{a.s.,}
 \end{align}
 where the right hand side of the above equation for $t=0$ and for $t=T$ is understood
 as an almost sure limit as $t\downarrow 0$ and $t\uparrow T$, respectively.
\end{theorem}

\begin{remark}\label{REMARK1}
(i) Using the notations of Theorem \ref{Thm_General}, the standardized version of the process $Z$ can be
 represented as
 \[
  {Z^*_t:=}
     \frac{Z_t - \EE(Z_t)}{\sqrt{\var(Z_t)}}
     = R\left(\ln\left( \int_0^t \frac{\sigma(u)^2}{\phi(u)^2}\,\dd u \right)\right),
      \qquad  \forall\; t\in(0,T) \quad \text{a.s.}
 \]
\noindent{(ii)}
Using the notations of Theorem \ref{Thm_General}, if we additionally suppose that
 $\lim_{t\uparrow T} \phi(t)\sqrt{\int_0^t \frac{\sigma(u)^2}{\phi(u)^2}\,\dd u} =0$,
 then, since $R$ is strictly stationary, by Slutsky's lemma, we have
 \[
     \phi(t)\sqrt{\int_0^t \frac{\sigma(u)^2}{\phi(u)^2}\,\dd u}\,
            \,R\left(\ln\left( \int_0^t \frac{\sigma(u)^2}{\phi(u)^2}\,\dd u \right)\right)
 \]
 converges in probability to $0$ as $t\uparrow T$.
Later, under some stronger additional assumptions, we will strengthen this statement, namely,
 instead of convergence in probability we will prove almost sure convergence, see Proposition \ref{Prop_General}.
\proofend
\end{remark}

Now, we formulate a partial converse of Theorem \ref{Thm_General}.

\begin{prop}\label{Prop_General_reversed}
For some $T\in(0,\infty]$, let $a:(0,T)\to(0,\infty)$ and $b:(0,T)\to\RR$ be continuously differentiable functions
 such that $b'(t)>0$ for all $t\in(0,T)$, and $\lim_{t\downarrow 0} b(t) = -\infty$.
Further, let $\psi:[0,T)\to\RR$ be a continuous function, and
  \begin{align}\label{help_converse}
    \phi(t):=\frac{a(t)}{\ee^{b(t)/2}}, \quad t\in(0,T),
    \qquad \text{and}\qquad \sigma(t):=\sqrt{b'(t)a^2(t)},\quad t\in(0,T).
  \end{align}
Provided that $\phi$ can be extended to be continuously differentiable on $[0,T)$
 with $\phi(0)=1$, and $\sigma$ can be extended to be continuous on $[0,T)$,
 there exists a strictly stationary centered Ornstein-Uhlenbeck process $R=(R_t)_{t\in\RR}$
 with $\cov(R_s,R_t) = \ee^{-\frac{\vert t-s\vert}{2}}$, $s,t\in\RR$, such that
 \begin{align}\label{help_main_transform_reversed}
      a(t) R_{b(t)} = \widetilde Z_t = Z_t - \EE(Z_t),
            \qquad  \forall\; t\in[0,T) \quad \text{a.s.,}
 \end{align}
 where $(Z_t)_{t\in[0,T)}$ is the pathwise unique strong solution of the SDE \eqref{SDE_general}
 with the extended versions of $\phi$ and $\sigma$ given in \eqref{help_converse} and
 with a non-random initial value $Z_0=\xi\in\RR$, and the left hand side of \eqref{help_main_transform_reversed} for $t=0$ is understood
 as an almost sure limit as $t\downarrow 0$.
\end{prop}

Next, we formulate some sufficient conditions under which $\widetilde Z_t$ converges to $0$ as $t\uparrow T$ with probability one.

\begin{prop}\label{Prop_General}
If $\lim_{t\uparrow T} \phi(t)=0$ and there exists some $\varepsilon>0$ such that the function
 \begin{align}\label{help_boundedness}
  \phi(t)\left( \int_0^t \frac{\sigma(u)^2}{\phi(u)^2}\,\dd u \right)^{\frac{1}{2}+\varepsilon},
   \qquad t\in[0,T),
 \end{align}
 is bounded, then for the mean centered process $(\widetilde Z_t)_{t\in[0,T)}$ we have
 $\PP(\widetilde Z_T:=\lim_{t\uparrow T} \widetilde Z_t = 0)=1$.
\end{prop}

We note that Li \cite[Proposition 1]{Li} and Hildebrandt and Roelly \cite[Proposition 2.3]{HilRoe} derived
 other sufficient conditions under which $\PP(\lim_{t\uparrow T} \widetilde Z_t = 0)=1$ holds.

\section{Distribution of the supremum location}\label{section_application}

For a stochastic process $X=(X_t)_{t\in\RR}$ with continuous sample paths and
 for an interval $[a,b]$, $a<b$, $a,b\in\RR$, let
 \begin{align*}
     \tau_{X,[a,b]}:=\inf\big\{ t\in[a,b] : X_t=\sup_{s\in[a,b]} X_s\big\} \qquad \text{and} \qquad
      M_{X,[a,b]}:=\sup_{s\in[a,b]} X_s,
 \end{align*}
 i.e., $\tau_{X,[a,b]}$ is the leftmost time point at which the overall supremum $M_{X,[a,b]}$ in the interval $[a,b]$ is achieved.
In what follows, we will simply call $\tau_{X,[a,b]}$ the supremum location for $X$ on the interval $[a,b]$.
We mention that the almost sure uniqueness of the supremum location on compact intervals of $\RR_+$
 for a continuous Gauss process $X$ satisfying $\var(X_t-X_s)\ne 0$ for $s\ne t$, $s,t\in\RR_+$,
 was proved in Kim and Pollard \cite[Lemma 2.6]{KimPol}.
For a corresponding result for regular one-dimensional diffusions (in the sense of It\^{o}--McKean \cite{ItoMck}), see Cs\'aki et al.\ \cite[part (i)
 of Remarks (2.2)]{CsaFolSal}.
Recently, Pimentel \cite[Theorem 1]{Pim} has given necessary and sufficient conditions for the almost sure uniqueness of the
 supremum location on compact intervals for stochastic processes with continuous sample paths,
 and then L\'opez and Pimentel \cite{LopPim} extended it to a variety of non-continuous and multivariate processes.

As it was mentioned in the Introduction, the supremum location of a Wiener bridge on the interval $[0,1]$ is uniformly distributed
 (see, e.g., Ferger \cite[Section 2]{Fer}).
The supremum location of the absolute value of a Wiener bridge on the interval $[0,1]$ is also absolutely continuous, Ferger \cite{Fer2} determined its
 density function.
Very recently, Ferger \cite[Theorem 1.1]{Fer3} has determined the density function of a Wiener bridge on $[0,1]$ standardized by its supremum location.
 Alabert and Caballero \cite[Proposition 3.1]{AlaCab} calculated the probability that the infimum location of the concatenation of
 Wiener bridges is located in one of the time intervals of the Wiener bridges building up the concatenation.

Using the notations of Section \ref{section_gen_fram}, if the set $\{t\in[0,T) : \sigma(t) = 0\}$ does not contain any interval and
 $\lim_{t\uparrow T}\int_0^t \frac{\sigma(u)^2}{\phi(u)^2}\,\dd u =: S_T\in(0,\infty]$,
 then the continuous function $\beta_T:(0,T)\to (-\infty,\ln( S_T))$,
 \[
   \beta_T(t):=\ln\left(\int_0^t \frac{\sigma(u)^2}{\phi(u)^2}\,\dd u \right),\qquad  t\in(0,T),
 \]
 is strictly increasing having inverse $\beta_T^{-1}:(-\infty,\ln( S_T))\to(0,T)$.
Consequently, using that $R$ is strictly stationary, by part (i) of Remark \ref{REMARK1}, we have
 \begin{align}\label{help_taubeta}
  \tau_{Z^*,[t_1,t_2]}
    = \beta_T^{-1}(\tau_{R,[ \beta_T(t_1),\beta_T(t_2)]})
   \distre
     \beta_T^{-1}\big( \beta_T(t_1) + \tau_{R,[0,\beta_T(t_2)-\beta_T(t_1)]}\big)
 \end{align}
 for any $0<t_1<t_2<T$, where $Z^*$ is given in part (i) of Remark \ref{REMARK1}, the first equality holds almost surely
 and $\distre$ denotes equality in distribution.
So we reduced the problem of determining the distribution of the supremum location of the standardized version $Z^*$
 to determine the distribution of the supremum location of $R$ on compact subintervals of the form $[0,T]$, $T>0$.

Samorodnitsky and Shen \cite{SamShe2} provide a lot of information on the distribution
 of $\tau_{R,[0,T]}$, $T>0$.
By Theorem 3.1 in Samorodnitsky and Shen \cite{SamShe2}, the restriction of the law of
 $\tau_{R,[0,T]}$ to the interior $(0,T)$ of the interval $[0,T]$ is absolutely continuous and very specific
 properties of the density function in question have been described, e.g.,
 the (c\`{a}dl\`{a}g version of) the density function is bounded away from 0 and its value
 at $t$ is bounded above by $\max(\frac{1}{t},\frac{1}{T-t})$,
 $t\in(0,T)$.
Using that $R$ is time reversible, i.e., the laws of the processes $(R_{-t})_{t\in\RR}$ and $(R_t)_{t\in\RR}$
 coincide, a finer upper bound for the density function in question has been derived, see
 Samorodnitsky and Shen \cite[Proposition 4.2]{SamShe2}.
Further, this density function is not bounded near each of the two endpoints of the interval $[0,T]$,
 and $\PP(\tau_{R,[0,T]}=0) = \PP(\tau_{R,[0,T]} = T) = 0$, $T>0$, see Samorodnitsky and Shen \cite[Example 3.7]{SamShe2}.
Later it will turn out that the law of $\tau_{R,[0,T]}$ (without restriction to $(0,T)$) is absolutely continuous,
 and we will derive an expression for its density function as well.
We point out that, compared to the general setup of Samorodnitsky and Shen \cite{SamShe2}, we can take the
 advantage that $R$ is not only strictly stationary, but a time-homogeneous Markov process as well,
 and hence we can use a general result of Cs\'aki et al. \cite{CsaFolSal} to handle the distribution of the supremum
 location of $R$.
Very recently, Shen \cite[Theorem 4.3]{She} has proved that for the infimum location on the interval $[0,1]$ of a non-constant, self-similar L\'evy process,
 one of the following three scenarios is true: it is concentrated at $0$ with probability one; it is concentrated at $1$ with probability one;
 or it is Beta-distributed with some given parameters.

In what follows we present a procedure which results in a formula for
 a density function of  the distribution of the supremum location of $R$
 on a compact interval of the form $[0,T]$, $T\in\RR_+$, \ and that of $Z^*$
 on a compact interval of the form $[t_1,t_2]$, $0<t_1<t_2<T$.
Note that the supremum location of $R$ in question is unique almost surely, see, Kim and Pollard \cite[Lemma 2.6]{KimPol}.
First, recall that the law of $(R_t)_{t\in\RR_+}$ can be represented as the law of the pathwise unique strong solution
 of an appropriate SDE.
Namely, if $(B_t)_{t\in\RR_+}$ is a standard Wiener process and $\xi$ is a standard normally distributed random variable
 independent of $(B_t)_{t\in\RR_+}$, then the process
 \[
   V_t:=\ee^{-\frac{t}{2}} \left( \xi + \int_0^t\ee^{\frac{r}{2}}\,\dd B_r\right),\qquad t\in\RR_+,
 \]
 is the pathwise unique strong solution of the SDE
 \[
   \dd V_t = -\frac{1}{2} V_t\,\dd t + \dd B_t, \qquad t\in\RR_+,
 \]
 with initial condition $V_0 = \xi$, and $(V_t)_{t\in\RR_+}$
 generates the same measure on $C(\RR_+)$ as $(R_t)_{t\in\RR_+}$.
The mapping
 \[
   C([0,T])\ni f\mapsto (M_{f,[0,T]}, f(T), \tau_{f,[0,T]}))\in\RR\times\RR\times[0,T]
 \]
 is measurable for all $T>0$, since $\{\tau_{f,[0,T]} \leq t\} =\{ M_{f,[0,t]} \geq M_{f,[t,T]} \}$ for all
 $t\in[0,T]$, $\{f\in C([0,T]) : f(T)\leq u\}$ is a cylinder set for $C([0,T])$ for any $u\in\RR$, and
 \[
   \{ M_{f,[0,T]} > u\} = \bigcup_{\{ s\in \rat \cap [0,T] \}} \{ f(s) > u\}, \qquad u\in\RR.
 \]
Hence the laws of
 $(M_{V,[0,T]}, V_T, \tau_{V,[0,T]})$ and  $(M_{R,[0,T]}, R_T, \tau_{R,[0,T]})$
 coincide for all $T>0$.
Using that the so-called scale function and speed measure
 (see, e.g., Karatzas and Shreve \cite[Section 5, formulae (5.42) and (5.51)]{KarShr}) corresponding to $V$
 take the forms
 \begin{align*}
   &S_c(x)=\int_c^x \exp\left\{ -2\int_c^y -\frac{z}{2}\,\dd z\right\}\dd y
         = \int_c^x \ee^{\frac{y^2-c^2}{2}}\,\dd y, \qquad x\in\RR,\quad c\in\RR,\\
   &m_c(B) = \int_B 2\ee^{-\frac{x^2-c^2}{2}}\,\dd x, \qquad B\in\cB(\RR),\quad  c\in\RR,
 \end{align*}
 by Theorem A in Cs\'aki et al. \cite{CsaFolSal}, we get
 \begin{align}\label{help_Csaki}
 \begin{split}
  &\PP( M_{R,[0,T]}\in A, R_T \in B, \tau_{R,[0,T]}\in C \mid R_0=x)\\
  & \quad = \int_A\int_B\int_C n_x(s,y) n_z(T-s,y) \mathbf 1_{\{x\leq y\}} \mathbf 1_{\{z\leq y\}}\, S_c(\dd y)\, m_c(\dd z)\,\dd s\\
  & \quad = \int_A\int_B\int_C n_x(s,y) n_z(T-s,y) 2\ee^{-\frac{z^2-c^2}{2}} \ee^{\frac{y^2-c^2}{2}}
            \mathbf 1_{\{x\leq y\}} \mathbf 1_{\{z\leq y\}}\, \dd y\,\dd z\,\dd s\\
  &\quad  = \int_A\int_B\int_C n_x(s,y) n_z(T-s,y) 2\ee^{-\frac{z^2-y^2}{2}}
            \mathbf 1_{\{x\leq y\}} \mathbf 1_{\{z\leq y\}} \,\dd y\,\dd z\,\dd s
 \end{split}
 \end{align}
 for all $T>0$, $x\in\RR$, $A,B\in\cB(\RR)$ and $C\in\cB([0,T])$, where
 $(0,\infty)\ni u \mapsto n_x(u,y)$ denotes the (conditional) density function
 of the random variable $\inf\{t\in(0,\infty) : R_t=y\}$ provided that $R_0=x$, where $x,y\in\RR$.
Note that the above formula does not depend on $c\in\RR$.
In Alili et al. \cite{AliPatPed} one can find several formulae for $n_x(u,y)$, $u\in(0,\infty)$, e.g., due to their formula (4.1),
 for all $x<y$, $x,y\in\RR$, and $u\in(0,\infty)$,
 \begin{align*}
  n_x(u,y)
    = \frac{1}{\pi} \int_0^\infty \cos\left(\frac{u\alpha}{2}\right)
       \frac{Hr_{-\alpha}\left(\frac{-y}{\sqrt{2}}\right) Hr_{-\alpha}\left(\frac{-x}{\sqrt{2}}\right)
             + Hi_{-\alpha}\left(\frac{-x}{\sqrt{2}}\right)Hi_{-\alpha}\left(\frac{-y}{\sqrt{2}}\right)}
              {\left(Hr_{-\alpha}\left(\frac{-y}{\sqrt{2}}\right)\right)^2 + \left(Hi_{-\alpha}\left(\frac{-y}{\sqrt{2}}\right)\right)^2}
              \,\dd\alpha,
 \end{align*}
 where, for all $\alpha\in\RR$, the functions
 \begin{align*}
  & Hr_\alpha(v):= \frac{2}{\sqrt{\pi}} \int_0^\infty \ee^{-s^2}
       \cos\left(\frac{\alpha}{2}\log\left(1+\left(\frac{v}{s}\right)^2\right)\right)\dd s,\qquad v\in\RR,\\[1mm]
  & Hi_\alpha(v):= \frac{2}{\sqrt{\pi}} \int_0^\infty \ee^{-s^2}
       \sin\left(\frac{\alpha}{2}\log\left(1+\left(\frac{v}{s}\right)^2\right)\right)\dd s,\qquad v\in\RR,
 \end{align*}
 are the real and imaginary parts of certain normalized Hermite functions, respectively.
Using \eqref{help_Csaki}, by the law of total probability and Fubini's theorem,
 \begin{align*}
  &\PP(\tau_{R,[0,T]}\in C)\\
  & = \int_{-\infty}^\infty \PP( M_{R,[0,T]}\in \RR, R_T \in \RR, \tau_{R,[0,T]}\in C \mid R_0=x)
            \frac{1}{\sqrt{2\pi}} \ee^{-\frac{x^2}{2}}\,\dd x\\
  & = \int_C \left(\int_{-\infty}^\infty\int_{-\infty}^\infty \int_{-\infty}^\infty n_x(s,y) n_z(T-s,y) 2\ee^{-\frac{z^2-y^2}{2}}
            \mathbf 1_{\{x\leq y\}} \mathbf 1_{\{z\leq y\}} \frac{1}{\sqrt{2\pi}} \ee^{-\frac{x^2}{2}}
            \,\dd x \,\dd y\,\dd z\right)\dd s
 \end{align*}
 for all $C\in\cB([0,T])$.
Consequently, a density function of $\tau_{R,[0,T]}$ can be chosen as
 \begin{align}\label{density_R}
   f_{\tau_{R,[0,T]}}(s) = \sqrt{\frac{2}{\pi}} \int_{-\infty}^\infty \left( \int_x^\infty \left( \int_{-\infty}^y n_x(s,y) n_z(T-s,y)
                              \ee^{-\frac{x^2+z^2-y^2}{2}}
                              \,\dd z \right)\dd y \right)\dd x
 \end{align}
 for $s\in(0,T)$.
Note that, by Samorodnitsky and Shen \cite[Theorems 3.1, 3.3 and Example 3.7]{SamShe2},
 $f_{\tau_{R,[0,T]}}$ is continuous on $(0,T)$, $\lim_{s\downarrow 0} f_{\tau_{R,[0,T]}}(s) = \infty$
 and $\lim_{s\uparrow T} f_{\tau_{R,[0,T]}}(s) = \infty$.
Using \eqref{help_taubeta}, for any $0<t_1<t_2<T$, a density function of $\tau_{Z^*,[t_1,t_2]}$ can be chosen as
  \begin{align*}
   f_{\tau_{Z^*,[t_1,t_2]}}(v)
    & = \beta_T'(v) f_{\tau_{R,[0,\beta_T(t_2) - \beta_T(t_1)]}}(\beta_T(v) - \beta_T(t_1)) \\
    & = \frac{\sigma(v)^2}{\phi(v)^2 \int_0^v \frac{\sigma(u)^2}{\phi(u)^2} \,\dd u}
        f_{\tau_{R,[0,\beta_T(t_2) - \beta_T(t_1)]}}(\beta_T(v) - \beta_T(t_1))
 \end{align*}
 for $v\in(t_1,t_2)$, where $f_{\tau_{R,[0,\beta_T(t_2) - \beta_T(t_1)]}}$ is given by \eqref{density_R}.

\section{Examples}\label{sec_examples}

\subsection{Scaled Wiener bridges}\label{subsection_scaled_Wiener_bridge}

Let $T\in(0,\infty)$ be fixed.
For all $\alpha\in\RR$, let us consider the SDE
 \begin{align}\label{alpha_W_bridge}
  \begin{cases}
   \dd Z_t=-\frac{\alpha}{T-t}\,Z_t\,\dd t+\dd B_t,\qquad t\in[0,T),\\
   \phantom{\dd} Z_0=0,
  \end{cases}
 \end{align}
 where $(B_t)_{t\in\RR_+}$ is a standard Wiener process defined
 on a probability space $(\Omega,\cA,\PP)$.
The SDE \eqref{alpha_W_bridge} has a pathwise unique strong solution, namely,
 \begin{align}\label{alpha_W_bridge_intrep}
   Z_t=\int_{0}^t\left(\frac{T-t}{T-s}\right)^\alpha\,\dd B_s,\qquad t\in[0,T),
 \end{align}
 as it can be checked by It\^{o}'s formula.
To our knowledge, these kinds of processes have been first considered in the case of $\alpha>0$
 by Brennan and Schwartz \cite{BreSch}; see also Mansuy \cite{Man}.
Note also that in case of $\alpha=1$ the process $(Z_t)_{t\in[0,T)}$ is nothing else
 but the usual Wiener bridge (from $0$ to $0$ over the time interval $[0,T]$).

It is known that in case of $\alpha>0$, the process $(Z_t)_{t\in[0,T)}$ given by
 \eqref{alpha_W_bridge_intrep} has an almost surely continuous extension $(Z_t)_{t\in[0,T]}$ to the
 time-interval $[0,T]$ such that $Z_T=0$ with probability one, see, e.g.,
 Mansuy \cite[page 1023]{Man} or Barczy and Pap \cite[Lemma 3.1]{BarPap}.
For positive values of $\alpha$, the possibility of such an extension is based on a strong law of
 large numbers for square integrable local martingales.
In case of $\alpha\leq 0$, there does not exist an almost surely continuous extension of the process
 $(Z_t)_{t\in[0,T)}$ to $[0,T]$ which would take some constant value at time $T$ with probability one.
However, for all $\alpha\in\RR$, the Gauss process $(Z_t)_{t\in[0,T)}$ given by \eqref{alpha_W_bridge_intrep}
 is called a scaled Wiener bridge or an $\alpha$-Wiener bridge.
More generally, we call any almost surely continuous (Gauss) process on the time interval
 $[0,T)$ having the same finite-dimensional distributions as $(Z_t)_{t\in[0,T)}$
 a scaled Wiener bridge (an $\alpha$-Wiener bridge).

One can easily calculate
 \begin{align*}
   \cov(Z_s,Z_t)
    = \begin{cases}
         \frac{(T-s)^\alpha (T-t)^\alpha}{1- 2\alpha} \left( T^{1-2\alpha} - (T-s\wedge t)^{1-2\alpha} \right)
              & \text{if \ $\alpha\ne\frac{1}{2}$,}\\[1mm]
          \sqrt{(T-s)(T-t)} \left(\ln(T) - \ln(T-s\wedge t)\right)
              & \text{if \ $\alpha=\frac{1}{2}$,}
      \end{cases}
 \end{align*}
 for $s,t\in[0,T)$, see Barczy and Pap \cite[Lemma 2.1]{BarPap}.

Let $\phi:[0,T)\to\RR$, $\phi(t):=(1-t/T)^\alpha$, $t\in[0,T)$, $\psi:[0,T)\to\RR$, $\psi(t):=0$, $t\in[0,T)$,
 and $\sigma(t):=1$, $t\in[0,T)$.
Then the SDE \eqref{SDE_general} is nothing else but the SDE of an $\alpha$-Wiener bridge,
 see \eqref{alpha_W_bridge}, and
 \begin{align}\label{help_scaled_Wiener1}
  \ln\left( \int_0^t \frac{\sigma(u)^2}{\phi(u)^2}\,\dd u \right)
    = \begin{cases}
        \ln\left(\frac{T^{2\alpha}}{1-2\alpha}\left(T^{1-2\alpha}  - (T-t)^{1-2\alpha} \right) \right)
                 & \text{if \ $\alpha\ne\frac{1}{2}$,}\\[1mm]
         \ln\left(T\ln \left( \frac{T}{T-t}\right) \right)
                 & \text{if \ $\alpha=\frac{1}{2}$,}
      \end{cases}
 \end{align}
 for $t\in(0,T)$.
In case of $\alpha>0$, since $\PP(\lim_{t\uparrow T} Z_t =0)=1$ and $\EE(Z_t)=0$, $t\in[0,T)$,
 we have $\PP(\lim_{t\uparrow T} \widetilde Z_t =0)=1$, so, by Theorem \ref{Thm_General},
 there exists a strictly stationary centered Ornstein-Uhlenbeck process $R=(R_t)_{t\in\RR}$ with
 $\cov(R_s,R_t) = \ee^{-\frac{\vert t-s\vert}{2}}$, $s,t\in\RR$, such that
 \begin{align}\label{help1}
       Z_t = \sqrt{\varr(Z_t)} \,R\left(\ln\left(\frac{T^{2\alpha}}{1-2\alpha}\big(T^{1-2\alpha} - (T-t)^{1-2\alpha}\big)\right)\right),
            \qquad \forall\; t\in[0,T] \quad \text{a.s.}
 \end{align}
 in case $\alpha\ne\frac{1}{2}$, $\alpha>0$, and
 \begin{align}\label{help2}
       Z_t =   \sqrt{\varr(Z_t)} \,
               R\left(\ln \left( T \ln \left(\frac{T}{T-t} \right)\right)\right),
               \qquad \forall\; t\in[0,T] \quad \text{a.s.}
 \end{align}
 in case $\alpha=\frac{1}{2}$, where
 \[
  \sqrt{\varr(Z_t)}
     = \begin{cases}
          (T-t)^\alpha \sqrt{\frac{T^{1-2\alpha} - (T-t)^{1-2\alpha}}{1-2\alpha}}
              & \text{if \ $\alpha\ne\frac{1}{2}$,}\\[2mm]
        \sqrt{(T-t) \ln\left(\frac{T}{T-t}\right)}
              & \text{if \ $\alpha = \frac{1}{2}$.}
      \end{cases}
 \]
Here the right hand sides of \eqref{help1} and \eqref{help2} for $t=0$ and $t=T$ are understood as almost sure limits as $t\downarrow 0$
 and $t\uparrow T$, respectively.

\begin{remark}
As it was mentioned, in case of $\alpha>0$, we have $\PP(\lim_{t\uparrow T} Z_t =0)=1$.
We point out to the fact that Proposition \ref{Prop_General} also yields it.
Indeed, $\EE(Z_t)=0$, $t\in[0,T)$, and Proposition \ref{Prop_General} can be applied with
 \[
     \varepsilon:= \begin{cases}
                    \frac{1}{2} & \text{if \ $0<\alpha\leq \frac{1}{2}$,}\\
                    \frac{1}{2(2\alpha-1)} & \text{if \ $\alpha>\frac{1}{2}$.}
                    \end{cases}
 \]
Indeed, if $0<\alpha<1/2$ and $\varepsilon=1/2$, then
 \begin{align*}
   \phi(t) \left(\int_0^t \frac{\sigma(u)^2}{\phi(u)^2}\,\dd u\right)^{1/2+\varepsilon}
      =  \left( 1 - \frac{t}{T} \right)^\alpha
         \frac{T^{2\alpha}}{1-2\alpha} (T^{1-2\alpha} - (T-t)^{1-2\alpha})
      \to 0
 \end{align*}
 as \ $t\downarrow 0$ \ or \ $t\uparrow T$.
If $\alpha=1/2$ and $\varepsilon=1/2$, then
 \begin{align*}
   \phi(t) \left(\int_0^t \frac{\sigma(u)^2}{\phi(u)^2}\,\dd u\right)^{1/2+\varepsilon}
    = \sqrt{1-\frac{t}{T}} T \ln\left(\frac{T}{T-t}\right)
    \to T\ln(1)=0 \quad \text{as \ $t\downarrow 0$,}
 \end{align*}
 and, by $\cL$'Hospital's rule,
 \begin{align*}
   \lim_{t\uparrow T}\phi(t) \left(\int_0^t \frac{\sigma(u)^2}{\phi(u)^2}\,\dd u\right)^{1/2+\varepsilon}
    = \lim_{t\uparrow T} \frac{T \ln\left(\frac{T}{T-t}\right)}{\left(1-\frac{t}{T}\right)^{-1/2}}
    = \lim_{t\uparrow T} 2\sqrt{T(T-t)}
    = 0.
 \end{align*}
If $\alpha>1/2$ and $\varepsilon=1/(2(2\alpha-1))$, then
 \begin{align*}
  \phi(t) \left(\int_0^t \frac{\sigma(u)^2}{\phi(u)^2}\,\dd u\right)^{1/2+\varepsilon}
    = \left(\frac{T^{2\alpha}}{2\alpha-1}\right)^{\alpha/(2\alpha-1)}
      \frac{1}{T^\alpha}\big(1-T^{1-2\alpha}(T-t)^{2\alpha-1}\big),
 \end{align*}
 which tends to $0$ as $t\downarrow 0$ and to $(2\alpha-1)^{-\alpha/(2\alpha-1)}T^{\alpha/(2\alpha-1)}$ as $t\uparrow T$.
All in all, with the given $\varepsilon$ the function given in \eqref{help_boundedness} is bounded.
\proofend
\end{remark}

\begin{remark}
Note that if $\alpha=1$ and $T=1$, then
 \begin{align*}
    &\sqrt{\var(Z_t)}  = \sqrt{t(1-t)},\qquad t\in[0,1], \\
    &\ln\left(\frac{T^{2\alpha}}{1-2\alpha}\big(T^{1-2\alpha} - (T-t)^{1-2\alpha}\big)\right)
       = \ln\left(\frac{t}{1-t}\right), \qquad t\in(0,1),
 \end{align*}
 and \eqref{help1} gives back the representation of a usual Wiener bridge
 (from $0$ to $0$ on the time interval $[0,1]$) via a space-time scaled stationary Ornstein-Uhlenbeck process,
  recalled in the Introduction.
\proofend
\end{remark}

Next we point out that the previous results presented for the $\alpha$-Wiener bridge given by the SDE \eqref{alpha_W_bridge}
 can be generalized to the so-called general $\alpha$-Wiener bridges introduced by Barczy and Kern \cite[Section 3]{BarKer1}.
Let $T\in(0,\infty)$ be fixed, and for a continuously differentiable function $\alpha: [0,T)\to\RR$, let us consider
 the SDE
 \begin{align}\label{gen_alpha_W_bridge}
  \begin{cases}
   \dd Z_t = -\frac{\alpha(t)}{T-t}Z_t \, \dd t + \dd B_t, \qquad t\in[0,T),\\
   \phantom{\dd} Z_0=0.
  \end{cases}
 \end{align}
This SDE has a pathwise unique strong solution given by
 \begin{align}\label{gen_alpha_W_bridge_intrep}
   Z_t = \int_0^t \exp\left\{ - \int_s^t\frac{\alpha(u)}{T-u}\,\dd u\right\}\dd B_s,
   \qquad t\in[0,T),
 \end{align}
 see, e.g., Barczy and Kern \cite[Proposition 3.1]{BarKer1}.
Further, if $\alpha(T):=\lim_{t\uparrow T}\alpha(t)$ exists and $\alpha(T)>0$, then
 the process $(Z_t)_{t\in[0,T)}$ given by \eqref{gen_alpha_W_bridge_intrep} has an almost surely
 continuous extension $(Z_t)_{t\in[0,T]}$ to the time-interval $[0,T]$ such that $Z_T=0$ with probability one,
 see, Barczy and Kern \cite[Theorem 3.3]{BarPap}.

Let $\phi:[0,T)\to\RR$, $\phi(t):=\exp\left\{ - \int_0^t \frac{\alpha(u)}{T-u}\,\dd u\right\}$, $t\in[0,T)$,
 $\psi:[0,T)\to\RR$, $\psi(t):=0$, $t\in[0,T)$, and $\sigma(t):=1$, $t\in[0,T)$.
Then the SDE \eqref{SDE_general} is nothing else but the SDE of a general $\alpha$-Wiener bridge given
 in \eqref{gen_alpha_W_bridge}, and, using Theorem \ref{Thm_General} and $\EE(Z_t) = 0$, $t\in[0,T)$,
 there exists a strictly stationary centered Ornstein-Uhlenbeck process $R=(R_t)_{t\in\RR}$
 with $\cov(R_s,R_t) = \ee^{-\frac{\vert t-s\vert}{2}}$, $s,t\in\RR$, such that
 \[
       Z_t = \phi(t)\sqrt{\int_0^t \frac{1}{\phi(u)^2} \,\dd u}
             \,R\left(\ln\left(\int_0^t \frac{1}{\phi(u)^2} \,\dd u \right)\right),
            \qquad \forall\; t\in[0,T), \quad \text{a.s.,}
 \]
 where the right hand side of the above equality at $t=0$ is understood as an almost sure limit as $t\downarrow 0$.
Further, if $\alpha(T):=\lim_{t\uparrow T}\alpha(t)$ exists and $\alpha(T)>0$,
 then $\PP(\lim_{t\uparrow T} Z_t =0)=1$ and $\EE(Z_t)=0$, $t\in[0,T)$, yield that
 $\PP(\lim_{t\uparrow T} \widetilde Z_t =0)=1$, so, by Theorem \ref{Thm_General},
  there exists a strictly stationary centered
 Ornstein-Uhlenbeck process $R=(R_t)_{t\in\RR}$ with
 $\cov(R_s,R_t) = \ee^{-\frac{\vert t-s\vert}{2}}$, $s,t\in\RR$, such that
 \[
       Z_t = \phi(t)\sqrt{\int_0^t \frac{1}{\phi(u)^2} \,\dd u}
             \,R\left(\ln\left(\int_0^t \frac{1}{\phi(u)^2} \,\dd u \right)\right),
            \qquad \forall\; t\in[0,T], \quad \text{a.s.,}
 \]
 where the right hand side of the above equation for $t=0$ and for $t=T$ is understood
 as an almost sure limit as $t\downarrow 0$ and $t\uparrow T$, respectively.

\begin{remark}
As it was mentioned, if $\alpha(T)=\lim_{t\uparrow T}\alpha(t)$ exists and $\alpha(T)>0$, then
 we have $\PP(\lim_{t\uparrow T} Z_t =0)=1$.
We point out to the fact that Proposition \ref{Prop_General} also yields it.
Indeed, $\EE(Z_t)=0$, $t\in[0,T)$, and Proposition \ref{Prop_General} can be applied with
 \[
  \varepsilon
   := \begin{cases}
        \frac{1+2(\delta_1 - \delta_2)}{2(2\delta_2 -1)} & \text{if \ $\alpha(T)\geq \frac{1}{2}$,}\\[1mm]
         \frac{1}{2} & \text{if \ $0<\alpha(T)<\frac{1}{2}$,}
      \end{cases}
 \]
 where $\delta_1$ and $\delta_2$ are chosen such that $0<\delta_1<\alpha(T)<\delta_2<\delta_1+1/2$.
We need to check that the function
 \begin{align*}
   \phi(t)&\left(\int_0^t \frac{\sigma(u)^2}{\phi(u)^2}\,\dd u\right)^{\frac{1}{2}+\varepsilon}\\
          & = \exp\left\{ - \int_0^t \frac{\alpha(u)}{T-u}\,\dd u\right\}
              \left( \int_0^t \exp\left\{ 2\int_0^u \frac{\alpha(v)}{T-v}\,\dd v\right\} \,\dd u \right)^{\frac{1}{2}+\varepsilon},
             \qquad t\in[0,T),
 \end{align*}
 is bounded.
Let us choose a $t_0\in(0,T)$ such that $\delta_1\leq \alpha(t)\leq \delta_2$ for all $t\in[t_0,T]$.

First, we consider the case $\alpha(T)\geq 1/2$.
Since
 \begin{align*}
  \int_0^t \exp\left\{ 2\int_0^u \frac{\alpha(v)}{T-v}\,\dd v\right\} \,\dd u
    = C_1 + C_2 \int_{t_0}^t \exp\left\{ 2\int_{t_0}^u \frac{\alpha(v)}{T-v}\,\dd v\right\} \,\dd u,
    \qquad t\in[t_0,T),
 \end{align*}
 where
 \begin{align*}
   C_1:= \int_0^{t_0} \exp\left\{ 2\int_0^u \frac{\alpha(v)}{T-v}\,\dd v\right\} \,\dd u
   \qquad \text{and} \qquad
   C_2:= \exp\left\{ 2\int_0^{t_0} \frac{\alpha(v)}{T-v}\,\dd v\right\},
 \end{align*}
 we have for all $t\in[t_0,T)$
 \begin{align*}
   \exp&\left\{ - \int_0^t \frac{\alpha(u)}{T-u}\,\dd u\right\}
        \left( \int_0^t \exp\left\{ 2\int_0^u \frac{\alpha(v)}{T-v}\,\dd v\right\} \,\dd u \right)^{\frac{1}{2}+\varepsilon}\\
   & \leq C_3 \exp\left\{ -\delta_1\int_{t_0}^t \frac{1}{T-v}\,\dd v\right\}
           \left(C_1 + C_2 \int_{t_0}^t \exp\left\{ 2\delta_2\int_{t_0}^u \frac{1}{T-v}\,\dd v\right\}
            \,\dd u \right)^{\frac{1}{2}+\varepsilon}\\
   & = C_3 \left(\frac{T-t}{T-t_0}\right)^{\delta_1}
           \left(C_1 + C_2\frac{(T-t_0)^{2\delta_2}}{2\delta_2-1}
                        \big((T-t)^{1-2\delta_2} - (T-t_0)^{1-2\delta_2}\big)
                 \right)^{\frac{1}{2}+\varepsilon}\\
   &\leq C_3 \left(\frac{T-t}{T-t_0}\right)^{\delta_1}
            \left(C_1 + C_2\frac{(T-t_0)^{2\delta_2}}{2\delta_2-1}
                        (T-t)^{1-2\delta_2}
                 \right)^{\frac{1}{2}+\varepsilon}\\
   & = C_3 \left( C_1\left(\frac{T-t}{T-t_0}\right)^{\frac{2\delta_1}{2\varepsilon+1}}
                      + \frac{C_2}{2\delta_2-1} (T-t_0)^{2\delta_2 - \frac{2\delta_1}{2\varepsilon+1}}
                         (T-t)^{1-2\delta_2 + \frac{2\delta_1}{2\varepsilon+1}}
            \right)^{\frac{1}{2}+\varepsilon},
 \end{align*}
 where
 \[
   C_3:=\exp\left\{-\int_0^{t_0} \frac{\alpha(u)}{T-u}\,\dd u\right\}.
 \]
Here, using that $2\delta_2 - 1>0$, $\delta_2-\delta_1<1/2$, and the explicit form of $\varepsilon$,
 one can easily verify that
 $1-2\delta_2 + \frac{2\delta_1}{2\varepsilon+1}>0$, yielding that the function
 \begin{align*}
   \phi(t)\left(\int_0^t \frac{\sigma(u)^2}{\phi(u)^2}\,\dd u\right)^{\frac{1}{2}+\varepsilon},
   \quad t\in[0,T),
 \end{align*}
 is bounded in case of $\alpha(T)\geq 1/2$.

Next, we consider the case $0<\alpha(T)<1/2$.
Additionally to $0<\delta_1<\alpha(T)<\delta_2<\delta_1+1/2$, we can also assume that $\delta_2<1/2$.
By the calculations for the case $\alpha(T)\geq 1/2$, we get
 \begin{align*}
  \int_0^t \exp\left\{ 2\int_0^u \frac{\alpha(v)}{T-v}\,\dd v\right\} \,\dd u
   & \leq C_1 + C_2 \frac{(T-t_0)^{2\delta_2}}{2\delta_2-1}
             \big( (T-t)^{1-2\delta_2} - (T-t_0)^{1-2\delta_2} \big) \\
   & \to C_1 + C_2\frac{T-t_0}{1-2\delta_2}
    \qquad \text{as \ $t\uparrow T$,}
 \end{align*}
 and
 \[
   \exp\left\{-\int_0^t \frac{\alpha(v)}{T-v}\,\dd v\right\}
       \leq C_3 \left(\frac{T-t}{T-t_0}\right)^{\delta_1}
       \to 0
       \qquad \text{as \ $t\uparrow T$,}
 \]
 where we used $1-2\delta_2>0$ and $\delta_1>0$.
This yields that the function
 \begin{align*}
   \phi(t)\left(\int_0^t \frac{\sigma(u)^2}{\phi(u)^2}\,\dd u\right)^{\frac{1}{2}+\varepsilon},
   \quad t\in[0,T),
 \end{align*}
 is bounded also in case of $0<\alpha(T)<1/2$.
\end{remark}

\subsection{Ornstein-Uhlenbeck type bridges}\label{subsection_OU}

First, we recall the notion and properties of Ornstein-Uhlenbeck type bridges to
the extent needed.
For a more detailed discussion and for the proofs of the results,
 see for example Barczy and Kern \cite{BarKer2} (where one can also find extensions to multidimensional bridges).

Let us consider an Ornstein-Uhlenbeck type process  $(X_t)_{t\in\RR_+}$ given by the SDE
 \begin{align}\label{gen_OU_egyenlet}
     \dd X_t=q(t)\,X_t\,\dd t + \sigma(t)\,\dd B_t,\qquad t\in\RR_+,
 \end{align}
 with an initial condition $X_0$ having a Gauss distribution independent of $B$,
 where $q:\RR_+\to\RR$ and $\sigma:\RR_+\to\RR$  are continuous functions
 and $(B_t)_{t\in\RR_+}$ is a standard Wiener process defined
 on a probability space $(\Omega,\cA,\PP)$.
Note that the SDE \eqref{gen_OU_egyenlet} coincides with the SDE \eqref{SDE_general} with $T=\infty$,
 $\phi(t) := \exp\left\{\int_0^t q(u)\,\dd u \right\}$, $t\in\RR_+$, and $\psi(t):=0$, $t\geq 0$.
By It\^{o}'s formula, there exists a pathwise unique strong solution of the SDE \eqref{gen_OU_egyenlet},
namely, for $t\in\RR_+$,
\begin{align}\label{gen_OU_egyenlet_megoldas}
  X_t=\ee^{\bar q(t)}\left(X_0 + \int_0^t \ee^{-\bar q(s)} \sigma(s) \,\dd B_s\right)\quad\text{ with }
       \quad \bar q(t):=\int_{0}^tq(u)\,\dd u.
 \end{align}

Let us introduce the following notations and assumptions.
Let
 \[
   \gamma(s,t):=\int_{s}^t\ee^{2(\bar q(t)-\bar q(u))}\sigma^2(u)\,\dd u<\infty,
   \qquad 0\leq s\leq t.
 \]
In what follows we will make the general assumption that
 \begin{align}\label{sigma_assumption}
       \sigma(t)\ne0  \qquad \text{for all $t\in\RR_+$},
 \end{align}
 which guarantees that $\gamma(s,t)$ is positive for all $0\leq s<t$.
Further, for all $a,b\in\RR$ and $0\leq s\leq t<T<\infty$, let
\begin{equation}\label{gen_OU_exp}
 n_{a,b}(s,t)
  :=\frac{\gamma(s,t)}{\gamma(s,T)}\,\ee^{\bar q(T)-\bar q(t)} b
     +\frac{\gamma(t,T)}{\gamma(s,T)}\ee^{\bar q(t)-\bar q(s)} a,
 \end{equation}
 and
 \begin{equation}\label{gen_OU_var}
  \sigma(s,t):=\frac{\gamma(s,t)\,\gamma(t,T)}{\gamma(s,T)}.
 \end{equation}

In Barczy and Kern \cite{BarKer2}, for fixed $T\in(0,\infty)$ and $a,b\in\RR$ we constructed
a Markov process $(Z_t)_{t\in[0,T]}$ with initial distribution
$\PP(Z_0=a)=1$ and with transition densities
 \begin{equation}\label{gen_bridge_densities}
    p_{s,t}^Z(x,y)=\frac{p_{s,t}^X(x,y)\,p_{t,T}^X(y,b)}{p_{s,T}^X(x,b)},
      \quad x,y\in\RR,\quad 0\leq s<t<T,
 \end{equation}
 such that $Z_{t}\to b=Z_T$ almost surely and also in $L^2$ as $t\uparrow T$,
 where $p_{s,t}^X$ denotes the transition densities of $X$.
The process $(Z_t)_{t\in[0,T]}$ is called a bridge of Ornstein-Uhlenbeck type from
$a$ to $b$ over $[0,T]$ derived from $X$, see also Definition \ref{DEFINITION_bridge}.
The construction is based on Theorem 3.1 in Barczy and Kern \cite{BarKer2}, which we recall now
 for completeness and for our later purposes.
For the proofs, see Barczy and Kern \cite{BarKer2}.

\begin{theorem}\label{THEOREM2}
 Let us suppose that condition \eqref{sigma_assumption} holds.
For fixed $a,b\in\RR$ and $T\in(0,\infty)$, let the process $(Z_t)_{t\in[0,T)}$ be given by
 \begin{align}\label{gen_OU_integral}
   \begin{split}
    Z_t=  n_{a,b}(0,t)
           +\int_{0}^t\frac{\gamma(t,T)}{\gamma(s,T)}\ee^{\bar q(t)-\bar q(s)} \sigma(s)\,\dd B_{s},
          \qquad t\in[0,T).
    \end{split}
 \end{align}
Then for any $t\in[0,T)$ the distribution of $Z_t$ is Gauss
 with mean $n_{a,b}(0,t)$ and with variance $\sigma(0,t).$ Especially, $Z_t\to b$ almost
 surely (and hence in probability) and in $L^2$ as $t\uparrow T$.
Hence the process $(Z_t)_{t\in[0,T)}$ can be extended to an almost surely (and hence stochastically) and
 $L^2$-continuous process $(Z_t)_{t\in[0,T]}$ with $Z_0=a $ and $Z_T=b$.
Moreover, $(Z_t)_{t\in[0,T]}$ is a Gauss-Markov process
 and for any $x\in\RR$ and $0\leq s<t<T$ the transition density
 $\RR\ni y\mapsto p_{s,t}^Z(x,y)$ of $Z_t$ given $Z_s=x$ is given by
 \begin{align*}
   p_{s,t}^Z(x,y)
     = \frac{1}{\sqrt{2\pi\sigma(s,t)}}
    \exp\left\{-\frac{\left(y-n_{x,b}(s,t)\right)^2}
                      {2\sigma(s,t)}\right\},
        \qquad y\in\RR.
 \end{align*}
\end{theorem}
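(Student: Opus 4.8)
The plan is to take the integral representation \eqref{gen_OU_integral} as the \emph{definition} of $(Z_t)_{t\in[0,T)}$ and to verify each assertion directly from it. First I would observe that, for fixed $t$, the integrand $s\mapsto \frac{\gamma(t,T)}{\gamma(s,T)}\ee^{\bar q(t)-\bar q(s)}\sigma(s)$ is non-random, so the Wiener integral in \eqref{gen_OU_integral} is centered Gaussian and $Z_t$ is Gauss with mean $n_{a,b}(0,t)$. For the variance, the It\^o isometry gives $\var(Z_t)=\gamma(t,T)^2\int_0^t \frac{\ee^{2(\bar q(t)-\bar q(s))}\sigma(s)^2}{\gamma(s,T)^2}\,\dd s$; writing $g(u):=\ee^{-2\bar q(u)}\sigma(u)^2$ and $G(s,t):=\int_s^t g(u)\,\dd u$, one has $\gamma(s,t)=\ee^{2\bar q(t)}G(s,t)$, so that $\frac{\dd}{\dd s}\frac{1}{G(s,T)}=\frac{g(s)}{G(s,T)^2}$ and the integral telescopes to $\frac{G(0,t)}{G(0,T)\,G(t,T)}$. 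Substituting back yields $\var(Z_t)=\sigma(0,t)$, as claimed.

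Next I would treat the limit $t\uparrow T$. The $L^2$-statement is immediate, since $\EE[(Z_t-b)^2]=\sigma(0,t)+(n_{a,b}(0,t)-b)^2$, where both $\sigma(0,t)\to 0$ and $n_{a,b}(0,t)\to b$ as $t\uparrow T$ (because $\gamma(t,T)\to 0$ while $\gamma(0,t)\to\gamma(0,T)$ and $\ee^{\bar q(T)-\bar q(t)}\to 1$). The almost sure statement is the main obstacle. Here I would factor out the $t$-dependence, writing $Z_t=n_{a,b}(0,t)+\gamma(t,T)\ee^{\bar q(t)}N_t$ with $N_t:=\int_0^t \frac{\ee^{-\bar q(s)}\sigma(s)}{\gamma(s,T)}\,\dd B_s$ a continuous $L^2$-martingale on $[0,T)$ whose quadratic variation $\langle N\rangle_t=\ee^{-4\bar q(T)}\frac{G(0,t)}{G(0,T)\,G(t,T)}$ tends to $\infty$ as $t\uparrow T$. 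By the Dambis, Dubins and Schwarz lemma, $N_t=W(\langle N\rangle_t)$ for a standard Wiener process $W$, and since $\gamma(t,T)\ee^{\bar q(t)}\sim \ee^{-\bar q(T)}/\langle N\rangle_t$ as $t\uparrow T$, the fluctuation term behaves like $\ee^{-\bar q(T)}W(\langle N\rangle_t)/\langle N\rangle_t$. The law of the iterated logarithm (equivalently the strong law for Brownian motion) gives $W(v)/v\to 0$ a.s. as $v\to\infty$, so $\gamma(t,T)\ee^{\bar q(t)}N_t\to 0$ a.s., and combined with $n_{a,b}(0,t)\to b$ this yields $Z_t\to b$ almost surely.

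Having the almost sure limit, the continuous extension follows: the paths of $Z$ are continuous on $[0,T)$ (being a continuous deterministic curve plus a continuous martingale), and setting $Z_T:=b$ produces an almost surely continuous process on $[0,T]$; one checks $Z_0=n_{a,b}(0,0)=a$ directly. Finally, for the Gauss--Markov property and the transition densities I would condition on $\cA_s$. Writing $N_t=N_s+(N_t-N_s)$ with the increment independent of $\cA_s$ and centered Gaussian of variance $\int_s^t g(u)/\gamma(u,T)^2\,\dd u$, the conditional law of $Z_t$ given $\cA_s$ is Gaussian; solving $Z_s=x$ for $N_s$ and substituting expresses both the conditional mean and variance solely through $x$ (and $N_s$ is recoverable from $Z_s$), which simultaneously establishes the Markov property and identifies the parameters. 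The same telescoping antiderivative computation as above gives conditional variance $\sigma(s,t)$, while the conditional mean reduces, via the Chapman--Kolmogorov-type identity $n_{a,b}(0,t)-\frac{\gamma(t,T)}{\gamma(s,T)}\ee^{\bar q(t)-\bar q(s)}n_{a,b}(0,s)=\frac{\gamma(s,t)}{\gamma(s,T)}\ee^{\bar q(T)-\bar q(t)}b$, to $n_{x,b}(s,t)$. Verifying this last algebraic identity is the only genuinely computational step; inserting the Gaussian parameters into the normal density then reproduces exactly the stated $p_{s,t}^Z(x,y)$, which in turn agrees with the $h$-transform form \eqref{gen_bridge_densities}.
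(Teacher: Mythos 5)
Your proposal is correct, but note that the paper itself contains no proof of this statement: Theorem \ref{THEOREM2} is recalled verbatim from Barczy and Kern \cite{BarKer2} with the explicit remark ``For the proofs, see Barczy and Kern \cite{BarKer2}'', so there is no in-paper argument to compare against. Judged on its own merits, your argument is sound and essentially self-contained. The mean/variance computation via the substitution $g(u)=\ee^{-2\bar q(u)}\sigma(u)^2$, $G(s,t)=\int_s^t g(u)\,\dd u$ and the telescoping antiderivative $\frac{\dd}{\dd s}\frac{1}{G(s,T)}=\frac{g(s)}{G(s,T)^2}$ is correct ($G(0,t)G(s,T)-G(t,T)G(0,s)=G(s,t)G(0,T)$ also underlies your Chapman--Kolmogorov-type identity, which does hold). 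Your route to the almost sure convergence $Z_t\to b$ --- factoring $Z_t=n_{a,b}(0,t)+\gamma(t,T)\ee^{\bar q(t)}N_t$, applying Dambis--Dubins--Schwarz to the martingale $N$, using $\langle N\rangle_t\to\infty$ and $W(v)/v\to 0$ a.s. --- is exactly the technique this paper uses for its own Theorem \ref{Thm_General} and Proposition \ref{Prop_General}, and it is equivalent to the strong law of large numbers for square-integrable martingales ($M_t/\langle M\rangle_t\to 0$ a.s.\ on $\{\langle M\rangle_\infty=\infty\}$) that the cited reference invokes; in fact, since your time change $\langle N\rangle_t$ is deterministic, you could even bypass DDS entirely, as a Gaussian martingale with independent increments is a deterministically time-changed Brownian motion. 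The conditioning argument for the Markov property and transition densities is also correct: the increment $N_t-N_s$ is independent of $\cA_s$, $N_s$ is an affine function of $Z_s$, and the conditional variance reduces to $\sigma(s,t)$ by the same telescoping computation. The only cosmetic gap is that the theorem asserts $L^2$-continuity on all of $[0,T]$, whereas you only verify the $L^2$ limit at $t=T$; continuity at interior points follows at once from the continuity of $t\mapsto n_{a,b}(0,t)$ and $t\mapsto\sigma(0,t)$ (or of the covariance function), so this is a one-line addition rather than a genuine flaw.
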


\begin{defi}\label{DEFINITION_bridge}
Let $(X_t)_{t\in\RR_+}$ be the process given by the SDE \eqref{gen_OU_egyenlet} with an initial
 Gauss random variable $X_0$ independent of $(B_t)_{t\in\RR_+}$
 and let us assume that condition \eqref{sigma_assumption} holds.
For fixed $a,b\in\RR$ and $T\in(0,\infty)$, the process $(Z_t)_{t\in[0,T]}$ defined in Theorem
 \ref{THEOREM2} is called a bridge of Ornstein-Uhlenbeck type from $a$ to $b$ over $[0,T]$ derived
 from $X$.
More generally, we call any almost surely continuous (Gauss) process on the time-interval
 $[0,T]$ having the same finite-dimensional distributions as $(Z_t)_{t\in[0,T]}$ a bridge
 of Ornstein-Uhlenbeck type from $a$ to $b$ over $[0,T]$ derived from $X$.
\end{defi}

One can also derive a SDE which is satisfied by the Ornstein-Uhlenbeck type bridge,
 see for example Theorem 3.3 in Barczy and Kern \cite{BarKer2}.

\begin{theorem}\label{LEMMA4}
Let us suppose that condition \eqref{sigma_assumption} holds.
The process $(Z_t)_{t\in[0,T)}$ defined by \eqref{gen_OU_integral} is a pathwise unique
 strong solution of the linear SDE
 \begin{align}\label{gen_OU_hid1_egyenlet}
  \begin{split}
   \dd Z_t= \left[\left(q(t)-\frac{\ee^{2(\bar q(T)-\bar q(t))}}{\gamma(t,T)}\, \sigma^2(t) \right)\,Z_{t}
            + \frac{\ee^{\bar q(T)-\bar q(t)}}{\gamma(t,T)}\, \sigma^2 (t) b\right]\dd t
            + \sigma(t)\,\dd B_t
 \end{split}\end{align}
for $t\in[0,T)$ and with initial condition $Z_0=a$.
\end{theorem}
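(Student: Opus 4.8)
The plan is to verify directly that the explicit process \eqref{gen_OU_integral} solves the SDE \eqref{gen_OU_hid1_egyenlet}, and then to invoke standard linear-SDE theory for uniqueness. Write the drift coefficient of \eqref{gen_OU_hid1_egyenlet} as $\beta(t)Z_t+\rho(t)$, where $\beta(t):=q(t)-\frac{\ee^{2(\bar q(T)-\bar q(t))}}{\gamma(t,T)}\sigma^2(t)$ and $\rho(t):=\frac{\ee^{\bar q(T)-\bar q(t)}}{\gamma(t,T)}\sigma^2(t)\,b$. Since $q$ and $\sigma$ are continuous and $\gamma(t,T)>0$ for $t\in[0,T)$ by \eqref{sigma_assumption}, both $\beta$ and $\rho$ are continuous on $[0,T)$; hence on every compact subinterval $[0,T-\eta]$ the coefficients are Lipschitz with linear growth, and pathwise uniqueness of the strong solution follows from standard theory for linear SDEs (e.g.\ Karatzas and Shreve \cite{KarShr}). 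It therefore suffices to check that \eqref{gen_OU_integral} satisfies \eqref{gen_OU_hid1_egyenlet}.

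To this end I split $Z_t=n_{a,b}(0,t)+M_t$ with $M_t:=\gamma(t,T)\ee^{\bar q(t)}I_t$ and $I_t:=\int_0^t\frac{\ee^{-\bar q(s)}\sigma(s)}{\gamma(s,T)}\,\dd B_s$. Setting $g(t):=\gamma(t,T)\ee^{\bar q(t)}$, the first key computation is $\frac{\dd}{\dd t}\gamma(t,T)=-\ee^{2(\bar q(T)-\bar q(t))}\sigma^2(t)$, obtained by writing $\gamma(t,T)=\ee^{2\bar q(T)}\int_t^T\ee^{-2\bar q(u)}\sigma^2(u)\,\dd u$ and differentiating; from this one gets the clean identity $g'(t)=\beta(t)g(t)$. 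Because $g$ is continuously differentiable, hence of bounded variation, the It\^o product rule gives $\dd M_t=g'(t)I_t\,\dd t+g(t)\,\dd I_t$. The diffusion term simplifies exactly, since $g(t)\cdot\frac{\ee^{-\bar q(t)}\sigma(t)}{\gamma(t,T)}=\sigma(t)$, so $g(t)\,\dd I_t=\sigma(t)\,\dd B_t$; and the drift term is $g'(t)I_t\,\dd t=\beta(t)g(t)I_t\,\dd t=\beta(t)M_t\,\dd t$.

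It remains to show that the deterministic mean $n_{a,b}(0,t)$ solves the ODE $\frac{\dd}{\dd t}n_{a,b}(0,t)=\beta(t)n_{a,b}(0,t)+\rho(t)$ with $n_{a,b}(0,0)=a$; combining this with the previous paragraph yields $\dd Z_t=[\beta(t)(n_{a,b}(0,t)+M_t)+\rho(t)]\,\dd t+\sigma(t)\,\dd B_t=[\beta(t)Z_t+\rho(t)]\,\dd t+\sigma(t)\,\dd B_t$, which is \eqref{gen_OU_hid1_egyenlet}; the initial condition is immediate because $\gamma(0,0)=0$ forces $n_{a,b}(0,0)=a$. Writing $n_{a,b}(0,t)=\frac{b\,\ee^{\bar q(T)}}{\gamma(0,T)}\gamma(0,t)\ee^{-\bar q(t)}+\frac{a}{\gamma(0,T)}g(t)$, the $a$-term again obeys $\frac{\dd}{\dd t}\big(\frac{a}{\gamma(0,T)}g(t)\big)=\beta(t)\cdot\frac{a}{\gamma(0,T)}g(t)$ by $g'=\beta g$, so it contributes only to the $\beta n$ part. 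For the $b$-term the main obstacle is the algebra: after differentiating, one must show the residual equals $\rho(t)$, and the device that makes this work is the additive decomposition $\gamma(0,T)=\ee^{2(\bar q(T)-\bar q(t))}\gamma(0,t)+\gamma(t,T)$, obtained by splitting the integral defining $\gamma(0,T)$ at $t$. This identity collapses the bracket $1+\frac{\ee^{2(\bar q(T)-\bar q(t))}}{\gamma(t,T)}\gamma(0,t)$ to $\frac{\gamma(0,T)}{\gamma(t,T)}$, and the $\gamma(0,T)$ factors cancel to leave precisely $\rho(t)$. I expect this final cancellation to be the only genuinely delicate step; everything else is bookkeeping.
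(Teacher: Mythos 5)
Your proposal is correct, and every computational claim in it checks out: $\frac{\dd}{\dd t}\gamma(t,T)=-\ee^{2(\bar q(T)-\bar q(t))}\sigma^2(t)$, hence $g'(t)=\beta(t)g(t)$ for $g(t)=\gamma(t,T)\ee^{\bar q(t)}$; the diffusion coefficient collapses exactly to $\sigma(t)$; and the $b$-part of the mean satisfies the inhomogeneous ODE precisely because $\gamma(0,T)=\ee^{2(\bar q(T)-\bar q(t))}\gamma(0,t)+\gamma(t,T)$ turns the residual bracket $1+\frac{\ee^{2(\bar q(T)-\bar q(t))}\gamma(0,t)}{\gamma(t,T)}$ into $\frac{\gamma(0,T)}{\gamma(t,T)}$, leaving exactly $\rho(t)$. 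The uniqueness argument is also fine: the drift is affine in the space variable with coefficients continuous on $[0,T-\eta]$ for every $\eta>0$, so pathwise uniqueness on each compact subinterval, and hence on $[0,T)$, follows from the standard Lipschitz/linear-growth theorem. The main point of comparison is that the paper under review does not prove this statement at all: it imports it verbatim from Theorem 3.3 of Barczy and Kern \cite{BarKer2} (``For the proofs, see Barczy and Kern''), where the corresponding result is established in a multidimensional setting by an argument of the same nature (It\^o's formula applied to the explicit representation, together with identities for $\gamma$). So your contribution is a complete, self-contained one-dimensional verification of a fact the paper only cites; the decomposition $Z_t=n_{a,b}(0,t)+g(t)I_t$ with the clean identity $g'=\beta g$ is an efficient way to organize it, separating the stochastic part (It\^o product rule with a finite-variation factor, so no cross-variation term) from the deterministic ODE for the mean.
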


By Lemma 2.7 in Barczy and Kern \cite{BarKer2}, one can easily calculate
 \begin{align*}
   \cov(Z_s,Z_t)
     = \ee^{\bar q(t) - \bar q(s)} \frac{\gamma(0,s) \gamma(t,T) }{\gamma(0,T)},
     \qquad 0\leq s\leq t<T.
 \end{align*}

Let us define the mean centered Ornstein-Uhlenbeck type bridge
 \[
    \widetilde Z_t
      := Z_t - \EE(Z_t)
       = Z_t - n_{a,b}(0,t), \qquad t\in[0,T].
 \]
Note that $\PP(\widetilde Z_0 = 0) = \PP(\widetilde Z_T = 0)=1$.

With the notations of Section \ref{SDE_general}, let $\xi:=a$, $\phi:[0,T)\to(0,\infty)$ and $\psi:[0,T)\to\RR$ be defined by
 \begin{align*}
   &\phi(t):=\frac{\gamma(t,T)\ee^{\bar q(t)}}{\gamma(0,T)}, \qquad t\in[0,T),\\
   &\psi(t):=\frac{\ee^{\bar q(T) - \bar q(t)}}{\gamma(t,T)}\sigma^2(t) b
            = \frac{\ee^{\bar q(T)}}{\gamma(0,T)\phi(t)}\sigma^2(t)b,\qquad t\in[0,T).
 \end{align*}
Since
 \[
   \frac{\phi'(t)}{\phi(t)}
     = q(t) - \sigma^2(t)\frac{\ee^{2(\bar q(T) - \bar q(t))}}{\gamma(t,T)},
     \qquad t\in[0,T),
 \]
 with our special choices of $\xi$, $\phi$ and $\psi$, the SDE \eqref{SDE_general} is nothing else but the SDE of an Ornstein-Uhlenbeck bridge from $a$ to $b$ over time interval $[0,T]$,
 see \eqref{gen_OU_hid1_egyenlet}.
Further, using part (b) of Lemma A.3 in Barczy and Kern \cite{BarKer2}, one can check that
 \begin{align*}
   \int_0^t \frac{\sigma(u)^2}{\phi(u)^2}\,\dd u
    = \frac{\ee^{-2\bar q(t)}\gamma(0,t)\gamma(0,T)}{\gamma(t,T)},
      \qquad t\in[0,T).
  \end{align*}
Since $Z_t\to b$ almost surely and in $L^2$ as $t\uparrow T$ (especially, $\EE(Z_t)\to b$ as $t\uparrow T$), we have
 $\PP(\widetilde Z_t \to 0 \ \text{as $t\uparrow T$})=1$, so, by Theorem \ref{Thm_General},
 there exists a strictly stationary centered Ornstein-Uhlenbeck process $R=(R_t)_{t\in\RR}$ with
 $\cov(R_s,R_t) = \ee^{-\frac{\vert t-s\vert}{2}}$, $s,t\in\RR$, such that
 \begin{align}\label{help3}
       \widetilde Z_t = \sqrt{\frac{\gamma(0,t) \gamma(t,T) }{\gamma(0,T)}}\,
             R\left(\ln\left(\frac{\ee^{-2\bar q(t)}\gamma(0,t)\gamma(0,T)}{\gamma(t,T)}\right)
              \right),
             \qquad \forall\; t\in[0,T], \quad \text{a.s.,}
 \end{align}
  where the right hand side of the above equation for $t=0$ and for $t=T$ is understood
 as an almost sure limit as $t\downarrow 0$ and $t\uparrow T$, respectively.

\begin{remark}
As it was mentioned, $\PP(\lim_{t\uparrow T} \widetilde Z_t =0)=1$ holds.
We point out to the fact that Proposition \ref{Prop_General} also yields it.
Indeed, Proposition \ref{Prop_General} can be applied with $\varepsilon:=1/2$,
 since
 \begin{align*}
   \phi(t) \left(\int_0^t \frac{\sigma(u)^2}{\phi(u)^2}\,\dd u\right)^{1/2+\varepsilon}
     & = \frac{\gamma(t,T)\ee^{\bar q(t)}}{\gamma(0,T)}
        \frac{\ee^{-2\bar q(t)}\gamma(0,t)\gamma(0,T)}{\gamma(t,T)}
       = \ee^{-\bar q(t)} \gamma(0,t)\\
     & = \ee^{\bar q(t)} \int_0^t \ee^{-2\bar q(u)}\sigma(u)^2\,\dd u,
      \qquad t\in[0,T),
 \end{align*}
 which is a bounded function, since the functions $q$ and $\sigma$ are continuous on $\RR_+$.
\proofend
\end{remark}

Next, we formulate the above presented results in the case of usual Ornstein-Uhlenbeck bridges.

\begin{remark}\label{REMARK3}
In case of $q(t)=q\ne0,$ $t\in\RR_+$, and $\sigma(t)=\sigma\ne0,$ $t\in\RR_+$,
 the bridge of Ornstein-Uhlenbeck type $(Z_t)_{t\in[0,T]}$ from $a$ to $b$ over
 $[0,T]$ defined in \eqref{gen_OU_integral} has the form
 \begin{align}\label{OU_hid1_intrep}
   Z_t=a\,\frac{\sinh(q(T-t))}{\sinh(qT)}
         + b\,\frac{\sinh(qt)}{\sinh(qT)}
         + \sigma  \int_0^t\frac{\sinh(q(T-t))}{\sinh(q(T-s))}\,\dd B_s
 \end{align}
 for $t\in[0,T)$ and $Z_T=b$, see, Remark 3.8 in Barczy and Kern \cite{BarKer2}.
In fact, the process $(Z_t)_{t\in[0,T)}$ is the pathwise unique strong solution of the SDE
 \[
  \dd Z_t = q\left(-\coth(q(T-t))Z_t + \frac{b}{\sinh(q(T-t))}\right)\dd t
        + \sigma\,\dd B_t,\qquad t\in[0,T),
 \]
 with an initial condition $Z_0=a$, see, Remark 3.9 in Barczy and Kern \cite{BarKer2}.
Then, by \eqref{help3}, there exists a strictly stationary centered Ornstein-Uhlenbeck process $R=(R_t)_{t\in\RR}$ with
  $\cov(R_s,R_t) = \ee^{-\frac{\vert t-s\vert}{2}}$, $s,t\in\RR$, such that
 \[
       \widetilde Z_t = \sqrt{\frac{\sigma^2\sinh(qt)\sinh(q(T-t))}{q\sinh(qT)}}
             R\left(\ln\left(\frac{\sigma^2\sinh(qt)\sinh(qT)}{q\sinh(q(T-t))}\right)
                    \right),
             \;\; \forall\; t\in[0,T], \;\; \text{a.s.,}
 \]
 where the right hand side of the above equation for $t=0$ and for $t=T$ is understood
 as an almost sure limit as $t\downarrow 0$ and $t\uparrow T$, respectively,
 since
 \[
   \gamma(s,t) = \frac{\sigma^2}{q}\ee^{q(t-s)}\sinh(q(t-s)),\qquad 0\leq s\leq t,
 \]
 and, by Barczy and Kern \cite[formula (1.7)]{BarKer2},
 \[
   \var(Z_t) = \phi(t)^2 \int_0^t \frac{\sigma(u)^2}{\phi(u)^2}\,\dd u
            = \frac{\sigma^2\sinh(qt)\sinh(q(T-t))}{q\sinh(qT)},
            \qquad t\in[0,T).
 \]
\proofend
\end{remark}

\subsection{$F$-Wiener bridges}
Let $f:\RR_+\to\RR_+$ be a probability density function on $\RR_+$ and
 let us consider the corresponding cumulative distribution function $F:\RR_+\to[0,1]$, $F(t):=\int_0^t f(s)\,\dd s$,
 $t\in\RR_+$.
Further, let
 \[
    T:=\inf\{t\in\RR_+ : F(t)=1\}\in(0,\infty]
 \]
 with the convention $\inf\emptyset:=\infty$.
Let us assume that $f$ is continuous on $[0,T)$, and that there exists a $\delta\in(0,T)$
 such that $f(t)\ne 0$ for all $t\in(0,\delta)$.
We consider the SDE
 \begin{align}\label{SDE_Khmaladze}
   \dd Z_t = -\frac{f(t)}{1- F(t)}Z_t\,\dd t + \sqrt{f(t)}\,\dd B_t, \qquad t\in[0,T),
 \end{align}
 with an initial value $Z_0=0$, where $(B_t)_{t\in\RR_+}$ is a standard Wiener process.
By It\^{o}'s formula,
 \[
    Z_t = \int_0^t \frac{1-F(t)}{1-F(s)}\sqrt{f(s)}\,\dd B_s, \qquad t\in[0,T),
 \]
 is a pathwise unique strong solution of the SDE \eqref{SDE_Khmaladze},
 and $(Z_t)_{t\in[0,T)}$ is a centered Gauss process with covariance function
 \begin{align*}
   \cov(Z_s,Z_t)
    & = (1-F(t))(1-F(s))\int_0^{s\wedge t}\frac{f(u)}{(1-F(u))^2}\,\dd u \\
    & = (1-F(t))(1-F(s)) \frac{F(s\wedge t)}{1-F(s\wedge t)}
      = F(s\wedge t) - F(s)F(t)
 \end{align*}
 for $s,t\in[0,T)$.
Note that $1-F(t)$, $t\in\RR_+$, is nothing else but the survival function, and
 $\frac{f(t)}{1-F(t)}$, $t\in\RR_+$, is the hazard rate (mean reversion rate) corresponding
 to the distribution function $F$.
Let $\phi:[0,T)\to(0,\infty)$, $\phi(t):=1-F(t)$, $t\in[0,T)$, $\psi:[0,T)\to\RR$, $\psi(t):=0$, $t\in[0,T)$,
 and $\sigma(t):=\sqrt{f(t)}$, $t\in[0,T)$.
Then the SDE \eqref{SDE_general} is nothing else but the SDE \eqref{SDE_Khmaladze}, and
 \begin{align*}
  \int_0^t\frac{\sigma(u)^2}{\phi(u)^2}\,\dd u
    = \int_0^t\frac{f(u)}{(1-F(u))^2}\,\dd u
    = \int_0^t\frac{F'(u)}{(1-F(u))^2} \,\dd u
    = \frac{F(t)}{1-F(t)},
    \qquad t\in[0,T).
 \end{align*}
Using Theorem \ref{Thm_General} and $\EE(Z_t) = 0$, $t\in[0,T)$, there exists a strictly stationary centered Ornstein-Uhlenbeck process
 $R=(R_t)_{t\in\RR}$ with $\cov(R_s,R_t) = \ee^{-\frac{\vert t-s\vert}{2}}$, $s,t\in\RR$,
 such that
 \[
   Z_t = \sqrt{F(t)(1-F(t))}\, R\left(\ln\left(\frac{F(t)}{1-F(t)}\right)\right), \qquad \forall\, t\in[0,T), \quad \text{a.s.,}
 \]
  where the right hand side of the above equation for $t=0$  is understood as an almost sure limit as $t\downarrow 0$.
Further, note that with $\varepsilon:=1/2$ we have
 \begin{align*}
   \phi(t) \left(\int_0^t \frac{\sigma(u)^2}{\phi(u)^2}\,\dd u\right)^{1/2+\varepsilon}
      = (1-F(t)) \frac{F(t)}{1-F(t)} = F(t), \qquad t\in[0,T),
 \end{align*}
 which is a bounded function.
Since $F$ is a continuous distribution function, $\lim_{t\uparrow T} F(t) = 1$, and  $\EE(Z_t)=0$, $t\in[0,T)$,
 by Proposition \ref{Prop_General}, we have $\PP(\lim_{t\uparrow T} Z_t = 0)=1$ and
 \[
   Z_t = \sqrt{F(t)(1-F(t))}\, R\left(\ln\left(\frac{F(t)}{1-F(t)}\right)\right), \qquad \forall\, t\in[0,T], \quad \text{a.s.,}
 \]
 where the right hand side of the above equation for $t=0$ and for $t=T$ is understood
 as an almost sure limit as $t\downarrow 0$ and $t\uparrow T$, respectively.
Then we can say that $Z$ is a bridge over $[0,T]$ in the sense that its starting and ending points are zero
 (more precisely, $Z_0=0$ and $\PP(\lim_{t\uparrow T} Z_t = 0)=1$), and we can call $Z$ as an $F$-Wiener bridge
 corresponding to the distribution function $F$.
For more information on $F$-Wiener bridges (also called $\PP$-Wiener bridges), see
 Shorack and Wellner \cite[page 838]{ShoWel}, van der Vaart \cite[page 266]{Vaa}
 or Khmaladze \cite[equation (4)]{Khm}.

To give an example, let us consider the cumulative distribution function $F:\RR_+\to[0,1]$ defined by
 \[
    F(t):=\begin{cases}
            1- \left(1-\frac{t}{T}\right)^\alpha & \text{if \ $t\in[0,T)$,}\\
            1 & \text{if \ $t\geq T$,}
           \end{cases}
 \]
 where $\alpha\in(0,\infty)$.
Then $f(t) = \frac{\alpha}{T}\left(1-\frac{t}{T}\right)^{\alpha-1}$ for $t\in[0,T)$, and $f(t)=0$
 for $t\in\RR_+\setminus [0,T)$, $\inf\{t\in\RR_+ : F(t)=1\} = T$ and the SDE \eqref{SDE_Khmaladze}
 of the $F$-Wiener bridge takes the form
 \begin{align*}
   \dd Z_t = -\frac{\alpha}{T-t}Z_t\,\dd t + \sqrt{f(t)}\,\dd B_t, \qquad t\in[0,T),
 \end{align*}
 with an initial value $Z_0=0$.
Note that the drift coefficients of this SDE and of the SDE \eqref{alpha_W_bridge} of an $\alpha$-Wiener bridge
 are the same.
However, the diffusion coefficients are different.

\subsection{Weighted Wiener processes and weighted Wiener bridges}

Let $(B_t)_{t\in\RR_+}$ be a standard Wiener process, and $(B^\circ_t)_{t\in[0,1]}$ be a Wiener bridge
 from $0$ to $0$ over $[0,1]$.
Let $w:\RR_+\to(0,\infty)$ be a continuously differentiable (weight) function such that $w(0)=1$
 (e.g., $w(t)=(1+t)^\alpha$, $t\in\RR_+$, with some $\alpha\in[1,\infty)$).
Let us define
 \[
    Z_t:=w(t)B_t,\qquad t\in\RR_+, \qquad \text{and}\qquad  Z^\circ_t:=w(t)B^\circ_t,\qquad t\in[0,1].
 \]
The process $(Z_t)_{t\in\RR_+}$ can be called a weighted Wiener process, and the process $(Z^\circ_t)_{t\in[0,1]}$ a weighted
 Wiener bridge.
We note that Deheuvels and Martynov \cite{DehMar} considered weighted Wiener processes and weighted Wiener bridges
 with a weight function $t^\alpha$ for some $\alpha\in(0,\infty)$ (however, this weight function is not in our setup,
 since the condition $w(0)=1$ does not hold).
By \eqref{OU_transform} and Subsection \ref{subsection_scaled_Wiener_bridge},
 there exists a strictly stationary centered Ornstein-Uhlenbeck process $R=(R_t)_{t\in\RR}$ with
 $\cov(R_s,R_t) = \ee^{-\frac{\vert t-s\vert}{2}}$, $s,t\in\RR$, such that
 \begin{align}\label{weighted_wiener_rep}
   Z_t = w(t)\sqrt{t} \,R_{\ln(t)}, \qquad \forall\, t\in\RR_+ \quad \text{a.s.,}
 \end{align}
 and
 \begin{align}\label{weighted_wiener_bridge_rep}
    Z^\circ_t = w(t) \sqrt{t(1-t)} \,R\left(\ln\left(\frac{t}{1-t}\right)\right), \qquad \forall\, t\in[0,1] \quad \text{a.s..}
 \end{align}
We point out that weighted Wiener processes and weighted Wiener bridges fit into our general framework
 (see Section \ref{section_gen_fram}), so one can directly apply
 Theorem \ref{Thm_General} and get the representations \eqref{weighted_wiener_rep} and \eqref{weighted_wiener_bridge_rep},
 detailed as follows.
Namely, by It\^{o}'s formula,
  \begin{align}\label{SDE_weighted_Wiener_process}
  & \dd Z_t = \frac{w'(t)}{w(t)} Z_t\,\dd t + w(t)\,\dd B_t,\qquad t\in\RR_+,\\  \label{SDE_weighted_Wiener_bridge}
  & \dd Z^\circ_t = \left(\frac{w'(t)}{w(t)} - \frac{1}{1-t}\right)Z^\circ_t\,\dd t + w(t)\,\dd B_t,\qquad t\in[0,1).
 \end{align}
The SDEs \eqref{SDE_weighted_Wiener_process} and \eqref{SDE_weighted_Wiener_bridge}
 have the form \eqref{SDE_general} by choosing $T:=\infty$, $\phi(t):=w(t)$, $t\in\RR_+$, $\psi(t):=0$,
 $t\in\RR_+$, $\sigma(t):=w(t)$, $t\in\RR_+$, and $T:=1$, $\phi(t):=w(t)(1-t)$, $t\in[0,1)$, $\psi(t):=0$,
 $t\in[0,1)$, $\sigma(t):=w(t)$, $t\in[0,1)$, respectively.
Concerning the time scalings, an easy calculation shows that
 for the SDE \eqref{SDE_weighted_Wiener_process}, we have
 \[
     \int_0^t\frac{\sigma(u)^2}{\phi(u)^2}\,\dd u = t, \qquad t\in\RR_+,
 \]
 and for the SDE \eqref{SDE_weighted_Wiener_bridge},
 \[
     \int_0^t\frac{\sigma(u)^2}{\phi(u)^2}\,\dd u
       = \frac{t}{1-t}, \qquad t\in[0,1),
 \]
 as desired.

\section{Counterexamples}\label{section_counterexamples}

In this section we give counterexamples for bridge processes that cannot be represented
 as a space-time scaled stationary Ornstein-Uhlenbeck process.

\subsection{Zero area Wiener bridge}\label{Zero area Wiener bridge}
Let $(B^\circ_t)_{t\in[0,1]}$ be a Wiener bridge from $0$ to $0$ over $[0,1]$, and let us consider the process
 \[
   B^\circ_t - 6t(1-t)\int_0^1 B^\circ_u\,\dd u, \qquad t\in[0,1],
 \]
 introduced by Deheuvels \cite{Deh}.
According to page 1191 in Deheuvels \cite{Deh}, this process coincides in law with
 a zero area Wiener bridge $(M_t)_{t\in[0,1]}$,
 which is defined by conditioning a standard Wiener process $(B_t)_{t\in[0,1]}$ such that $B_1=0$ and $\int_0^1 B_u\,\dd u = 0$
 (for a precise definition, see G\"orgens \cite[Section 1.1]{Goe}).
The zero area Wiener bridge $(M_t)_{t\in[0,1]}$ is a Gauss process and it has a covariance function
 \[
     \cov(M_s,M_t) = s\wedge t -st -3st(1-s)(1-t),\qquad s,t\in[0,1],
 \]
 see, Deheuvels \cite[Lemma 2.1 with $K=1$]{Deh}.

We check that one cannot find a monotone function $\tau:[0,1]\to \RR$ such that
 \begin{align}\label{help_Goergens_2}
   \cov(M_s,M_t) = \sqrt{\var(M_s)}\sqrt{\var(M_t)}\cov(R_{\tau(s)},R_{\tau(t)}),
    \qquad s,t\in[0,1],
 \end{align}
 where $R=(R_t)_{t\in\RR}$ is a strictly stationary centered Ornstein-Uhlenbeck process with
 $\cov(R_s,R_t) = \ee^{-\frac{\vert t-s\vert}{2}}$, $s,t\in\RR$.
On the contrary, let us suppose that there exists such a function $\tau$.
Then, due to the covariance structure of $R$,
 the covariance function $\cov(M_s,M_t)$, $s,t\in[0,1]$, would be written as a product of a function only of $s$ and
 a function only of $t$, i.e., there would exist some functions $f:[0,1]\to\RR$ and $g:[0,1]\to\RR$
 such that $\cov(M_s,M_t) = f(s)g(t)$, $s,t\in[0,1]$.
Then for all $0\leq s\leq t\leq 1$, we have
 \begin{align*}
  \cov(M_s,M_t)
    = s-st-3st(1-s)(1-t)
    = s(1-t)(1-3t(1-s))
    = f(s)g(t),
 \end{align*}
 which yields that
 \begin{align}\label{help_Goergens}
   1 - 3t(1-s) = \frac{f(s)}{s}\frac{g(t)}{1-t}
               =: \widetilde f(s) \widetilde g(t),
             \qquad 0<s\leq t<1.
 \end{align}
By substituting $s:=1/2$ into \eqref{help_Goergens}, we have
 \[
   1-\frac{3}{2}t = \widetilde f(1/2)\widetilde g(t), \qquad t\in[1/2,1),
 \]
 and hence $\widetilde f(1/2)\ne 0$ and
 \begin{align*}
  \widetilde g(t) = \frac{1-\frac{3}{2}t}{\widetilde f(1/2)},
     \qquad t\in[1/2,1).
 \end{align*}
By substituting $t:=1/2$ into \eqref{help_Goergens},
 \[
   1-\frac{3}{2}(1-s) = \widetilde f(s)\widetilde g(1/2)
                      = \widetilde f(s)\frac{1}{4\widetilde f(1/2)}, \qquad s\in(0,1/2].
 \]
Then
 \[
   \widetilde f(s) = 4\widetilde f(1/2)\left(1-\frac{3}{2}(1-s)\right),
     \qquad s\in(0,1/2].
 \]
Hence
 \begin{align*}
  \widetilde f(s) \widetilde g(t)
    = 4 \left(1-\frac{3}{2}(1-s)\right) \left(1-\frac{3}{2}t\right)
    = -2 + 6s + 3t - 9st
 \end{align*}
 for $s\in(0,1/2]$ and $t\in[1/2,1)$.
Using \eqref{help_Goergens}, by choosing, e.g., $s:=1/4$ and $t:=2/3$,
 we arrive at a contradiction, since $1-3t(1-s) = -1/2$ and
 $-2 + 6s + 3t - 9st = 0$.
Hence the law of $(M_t)_{t\in[0,1]}$ cannot be the same as the law of
 $(\sqrt{\var(M_t)}\, R_{\tau(t)})_{t\in[0,1]}$ for any monotone function
 $\tau:[0,1]\to \RR$.

Next, we present another short proof for the fact that one cannot find a monotone function $\tau:[0,1]\to \RR$ such that \eqref{help_Goergens_2} holds, communicated to us by Yakov Nikitin.
The right hand side of \eqref{help_Goergens_2} is non-negative for all $s,t\in[0,1]$,
 but one can choose a sufficiently small $\varepsilon\in(0,1)$ such that $\cov(M_\varepsilon,M_{1-\varepsilon})$
 is negative.
Indeed,
 \begin{align*}
  \cov(M_\varepsilon,M_{1-\varepsilon})
   = \varepsilon - \varepsilon(1-\varepsilon)
     - 3\varepsilon^2(1-\varepsilon)^2
   = \varepsilon^2(-3\varepsilon^2+6\varepsilon-2),
 \end{align*}
 which is negative for sufficiently small $\varepsilon\in(0,1)$.

We note that, for a zero area Wiener bridge $(M_t)_{t\in[0,1]}$,
 Deheuvels \cite[Theorem 1.2]{Deh} derived a Karhunen--Lo\`eve expansion,
 Nazarov \cite[Theorem 1]{Naz} investigated small ball probablilities, and G\"orgens \cite[Section 6.1]{Goe} presented a SDE
 with $(M_t)_{t\in[0,1]}$ as a strong solution.
However, up to our knowledge, no integral representation is available
 for $(M_t)_{t\in[0,1]}$, so $(M_t)_{t\in[0,1]}$ does not fit into the framework of Section \ref{section_gen_fram}.

\subsection{Glued Wiener bridge}\label{subsection_glued}
We present another simple counterexample initiated by Helmut Finner.
Namely, if we take two independent Wiener bridges from $0$ to $0$ over $[0,1]$ and over $[1,2]$, respectively, and
 glue them together, then it is a Gauss bridge from $0$ to $0$ over $[0,2]$, \ but
 it cannot be represented as a space-time transformed strictly stationary centered Ornstein-Uhlenbeck process.
Indeed, if $(B^\circ_t)_{t\in[0,1]}$ and $(W^\circ_t)_{t\in[0,1]}$ are independent Wiener bridges from $0$ to $0$
 over $[0,1]$, then for the so-called glued Wiener bridge $G_t:= B^\circ_t\mathbf 1_{[0,1]}(t)
  + W^\circ_{t-1}\mathbf 1_{[1,2]}(t)$, $t\in[0,2]$,
 one cannot find a monotone function $\tau:[0,2]\to \RR$ such that
  \[
   \cov(G_s,G_t) = \sqrt{\var(G_s)}\sqrt{\var(G_t)}\cov(R_{\tau(s)},R_{\tau(t)})
    \qquad \text{for all \ $s,t\in[0,2]$,}
 \]
 where $R=(R_t)_{t\in\RR}$ is a strictly stationary centered Ornstein-Uhlenbeck process with
 $\cov(R_s,R_t) = \ee^{-\frac{\vert t-s\vert}{2}}$, $s,t\in\RR$.
On the contrary, let us suppose that there exists such a function $\tau$.
Then for $s=\frac{1}{2}$ and $t=\frac{3}{2}$, we would have $\cov(G_{1/2},G_{3/2})=0$, and
 \[
   \sqrt{\var(G_{1/2})}\sqrt{\var(G_{3/2})}\cov(R_{\tau(1/2)},R_{\tau(3/2)})
      = \frac{1}{4}\ee^{-\frac{\vert\tau(3/2) - \tau(1/2)\vert}{2}} >0,
 \]
 leading to a contradiction.
Hence the law of $(G_t)_{t\in[0,1]}$ cannot be the same as the law of $(\sqrt{\var(M_t)} R_{\tau(t)})_{t\in[0,1]}$
 for any monotone function $\tau:[0,1]\to\RR$.

\appendix

\section{Proofs}\label{App_proofs}

{\bf First proof of Theorem \ref{Thm_General}.}
Following the notations of Lachout \cite{Lac1}, let us consider a collection of stochastic integrals of
 non-random real functions with respect to a standard Wiener process, namely,
 \[
   \int_0^\infty a_\theta(u)\,\dd B_u, \qquad \theta\in\Theta,
 \]
 where \ $\Theta\subseteq \RR$ \ is a non-empty Borel measurable set, $a_\theta:\RR_+\to\RR$ belongs to $L^2(\RR_+)$  for every
 $\theta\in\Theta$, and $(B_t)_{t\in\RR_+}$ is a standard Wiener process.
Let $f:\Theta\to\RR$  be a Borel measurable function.
By Theorem 4.1 in Lachout \cite{Lac1}, there exists a strictly stationary centered Ornstein-Uhlenbeck process
 $N=(N_t)_{t\in\RR}$ with $\cov(N_s,N_t)=\ee^{-\vert t-s\vert}$, $s,t\in\RR$, such that
 \begin{align}\label{help_Lachout}
    \int_0^\infty a_\theta(u)\,\dd B_u = N(f(\theta)) \qquad \text{ a.s. for all \ $\theta\in\Theta$}
 \end{align}
 if and only if
 \begin{align}\label{help_Lachout2}
    \int_0^\infty a_{\theta_1}(u) a_{\theta_2}(u)\,\dd u = \ee^{-\vert f(\theta_1) - f(\theta_2)\vert}
      \qquad \text{for all \ $\theta_1,\theta_2\in\Theta$.}
 \end{align}
In what follows we apply Lachout's result to our model given in Section \ref{section_gen_fram}.
Namely, let $\Theta:=(0,T)$, and for all $t\in(0,T)$, let $a_t:\RR_+\to\RR$ be given by
 \[
    a_t(u):=\mathbf 1_{[0,t]}(u) \frac{\sigma(u)}{\phi(u)}
                 \left(\int_0^t \frac{\sigma(r)^2}{\phi(r)^2}\,\dd r\right)^{-1/2},
                 \qquad u\in\RR_+.
 \]
Then, for all $t\in(0,T)$, we have $a_t\in L^2(\RR_+)$ and, by \eqref{help_centered},
 \begin{align*}
   \frac{\widetilde Z_t}{\sqrt{\var(Z_t)}}
      = \frac{Z_t - \EE(Z_t)}{\sqrt{\var(Z_t)}}
      =  \left(\int_0^t \frac{\sigma(r)^2}{\phi(r)^2}\,\dd r \right)^{-1/2}
         \int_0^t \frac{\sigma(r)}{\phi(r)}\,\dd B_r
      = \int_0^\infty a_\theta(u)\,\dd B_u.
 \end{align*}
Further, if $t_1\leq t_2$, $t_1,t_2\in(0,T)$, then
 \begin{align*}
  \int_0^\infty a_{t_1}(u)a_{t_2}(u)\,\dd u
     = \frac{\int_0^{t_1} \frac{\sigma(r)^2}{\phi(r)^2}\,\dd r}
            {\left(\int_0^{t_1} \frac{\sigma(r)^2}{\phi(r)^2}\,\dd r \right)^{1/2}
              \left(\int_0^{t_2} \frac{\sigma(r)^2}{\phi(r)^2}\,\dd r \right)^{1/2}}
     = \left(\frac{\int_0^{t_1} \frac{\sigma(r)^2}{\phi(r)^2}\,\dd r}
                   {\int_0^{t_2} \frac{\sigma(r)^2}{\phi(r)^2}\,\dd r}\right)^{1/2}.
 \end{align*}
Let $f:(0,T)\to\RR$ be given by
 \[
   f(t):=\frac{1}{2}\ln\left(\int_0^t \frac{\sigma(r)^2}{\phi(r)^2}\,\dd r \right),
              \qquad t\in(0,T).
 \]
Then
 \begin{align*}
  \ee^{-\vert f(t_1) - f(t_2)\vert}
    = \ee^{f(t_1) - f(t_2)}
    = \left(\frac{\int_0^{t_1} \frac{\sigma(r)^2}{\phi(r)^2}\,\dd r}
                   {\int_0^{t_2} \frac{\sigma(r)^2}{\phi(r)^2}\,\dd r}\right)^{1/2},
                   \qquad t_1\leq t_2, \;\;t_1,t_2\in(0,T).
 \end{align*}
Hence, by Theorem 4.1 in Lachout \cite{Lac1}, there exists a strictly stationary centered Ornstein-Uhlenbeck process
 $N=(N_t)_{t\in\RR}$ with $\cov(N_s,N_t) = \ee^{-\vert t-s\vert}$, $s,t\in\RR$, such that
 \begin{align*}
  \frac{\widetilde Z_t}{\sqrt{\var(Z_t)}}
    = N(f(t))
    \quad \text{a.s. for all \ $t\in(0,T)$.}
 \end{align*}
Hence
 \begin{align*}
   \widetilde Z_t
    = \phi(t)\sqrt{\int_0^t \frac{\sigma(u)^2}{\phi(u)^2}\,\dd u}\;
      N\left(\frac{1}{2}\ln\left(\int_0^t \frac{\sigma(r)^2}{\phi(r)^2}\,\dd r \right)\right)
    \quad \text{a.s. for all \ $t\in(0,T)$.}
 \end{align*}
{By choosing $R_t:=N_{t/2}$, $t\in\RR$, we have}
 \[
    \widetilde Z_t
      = \phi(t) \sqrt{\int_0^t \frac{\sigma(u)^2}{\phi(u)^2}\,\dd u}\;
         R\left(\ln\left(\int_0^t \frac{\sigma(r)^2}{\phi(r)^2}\,\dd r \right)\right)
    \quad \text{a.s. for all \ $t\in(0,T)$,}
 \]
 where $(R_t)_{t\in\RR}$ is a strictly stationary centered Ornstein-Uhlenbeck process
  with $\cov(R_s,R_t) = \ee^{-\frac{\vert t-s\vert}{2}}$, $s,t\in\RR$.
Since $\rat$ is countable, we have
 \[
    \widetilde Z_t
      = \phi(t) \sqrt{\int_0^t \frac{\sigma(u)^2}{\phi(u)^2}\,\dd u}\;
         R\left(\ln\left(\int_0^t \frac{\sigma(r)^2}{\phi(r)^2}\,\dd r \right)\right)
    \quad \text{for all \ $t\in(0,T)\cap\rat$ \ a.s.,}
 \]
 and, since $Z$ and $R$ have continuous sample paths, we have
 \[
    \widetilde Z_t
      = \phi(t) \sqrt{\int_0^t \frac{\sigma(u)^2}{\phi(u)^2}\,\dd u}\;
         R\left(\ln\left(\int_0^t \frac{\sigma(r)^2}{\phi(r)^2}\,\dd r \right)\right)
     \quad \text{for all \ $t\in(0,T)$ \ a.s.}
 \]
This yields \eqref{help_main_transform}, since $\PP(\widetilde Z_t \to \widetilde Z_0=0 \ \text{as $t\downarrow 0$})=1$
  following from the facts that $(Z_t)_{t\in[0,T)}$ has continuous sample paths almost surely and $\EE(Z_t) = \phi(t)\xi
 + \phi(t)\int_0^t \frac{\psi(u)}{\phi(u)}\,\dd u$, $t\in[0,T)$, is continuous.
Finally, Equation \eqref{rep_closed} readily follows by \eqref{help_main_transform} and $\PP(\lim_{t\uparrow T} \widetilde Z_t = 0)=1$.
\proofend

\medskip

{\bf Second proof of Theorem \ref{Thm_General}.}
The proof consists of two parts: first we check that the pathwise unique strong solution of the SDE \eqref{SDE_general}
 can be represented as a space-time transformed standard Wiener process, and then we use Lamperti transformation
 recalled in the introduction.
Namely, by Dambis, Dubins and Schwarz lemma (see, e.g., Revuz and Yor \cite[Chapter V, Theorems 1.6 and 1.7]{RevYor}
 or Karatzas and Shreve \cite[Theorem 3.4.6 and Problem 3.4.7]{KarShr}),
 there exists a standard Wiener process $(W_t)_{t\in\RR_+}$ (possibly on an enlargement of the original
 probability space and stopped at $\lim_{t\uparrow T}\int_0^t \frac{\sigma(u)^2}{\phi(u)^2}\,\dd u$) such that
 \begin{align}\label{help_main_proof1}
 \widetilde Z_t
    = \phi(t)\int_0^t \frac{\sigma(u)}{\phi(u)}\,\dd B_u
    = \phi(t)W\left(\int_0^t \frac{\sigma(u)^2}{\phi(u)^2}\,\dd u\right),
   \qquad \forall \; t\in[0,T)\quad \text{a.s.}
 \end{align}
Indeed, $\int_0^t \frac{\sigma(u)}{\phi(u)}\,\dd B_u$, $t\in[0,T)$, is a continuous $L^2$-martingale starting at $0$,
 since $\int_0^t \frac{\sigma(u)^2}{\phi(u)^2}\,\dd u<\infty$ for all $t\in[0,T)$,
 and we note that even if $\lim_{t\uparrow T}\int_0^t \frac{\sigma(u)^2}{\phi(u)^2}\,\dd u = \infty$
 does not hold, one can apply Dambis, Dubins and Schwarz lemma.
Let
 \[
    R_t:= \ee^{-\frac{t}{2}} W_{\ee^t},\qquad t\in\RR,
 \]
 yielding $W_t = \sqrt{t} R_{\ln(t)}$, $t>0$.
Then, as it was recalled in the Introduction, $R$ is a strictly stationary centered Ornstein-Uhlenbeck process
 with $\cov(R_s,R_t) = \ee^{-\frac{\vert t-s\vert}{2}}$, $s,t\in\RR$,
 and, by \eqref{help_main_proof1},
 \[
    \widetilde Z_t = \phi(t)\sqrt{\int_0^t \frac{\sigma(u)^2}{\phi(u)^2}\,\dd u}\,
                           \,R\left(\ln\left( \int_0^t \frac{\sigma(u)^2}{\phi(u)^2}\,\dd u \right)\right)
 \]
 for all $t\in(0,T)$ almost surely.
Further,
 \begin{align}\label{help_main_transform2}
   \begin{split}
   &\lim_{t\downarrow 0}
    \phi(t)\sqrt{\int_0^t \frac{\sigma(u)^2}{\phi(u)^2}\,\dd u}\,
     \,R\left(\ln\left( \int_0^t \frac{\sigma(u)^2}{\phi(u)^2}\,\dd u \right)\right) \\
   &\qquad  \qquad = \lim_{t\downarrow 0} \phi(t)W\left(\int_0^t \frac{\sigma(u)^2}{\phi(u)^2}\,\dd u \right)
            = \phi(0)W_0
            =0 \qquad \text{a.s.,}
   \end{split}
 \end{align}
 yielding \eqref{help_main_transform}, as desired.
Finally, Equation \eqref{rep_closed} readily follows by \eqref{help_main_transform} and $\PP(\lim_{t\uparrow T} \widetilde Z_t = 0)=1$.
\proofend

\medskip

{\bf Proof of Proposition \ref{Prop_General_reversed}.}
Since for all \ $t\in(0,T)$,
 \begin{align*}
  \phi(t)\sqrt{\int_0^t \frac{\sigma(u)^2}{\phi(u)^2}\,\dd u}
     = \frac{a(t)}{\ee^{b(t)/2}} \sqrt{\int_0^t  b'(u)\ee^{b(u)}\,\dd u}
     = \frac{a(t)}{\ee^{b(t)/2}} \sqrt{\ee^{b(t)} - \lim_{t\downarrow 0} \ee^{b(t)}}
     = a(t)
 \end{align*}
 and
 \begin{align*}
  \ln\left(\int_0^t \frac{\sigma(u)^2}{\phi(u)^2}\,\dd u\right) = \ln(\ee^{b(t)}) = b(t),
 \end{align*}
 Theorem \ref{Thm_General} yields the statement.
\proofend

\medskip

{\bf Proof of Proposition \ref{Prop_General}.}
We need to check that
 \[
   \lim_{t\uparrow T} \widetilde Z_t
     = \lim_{t\uparrow T} \phi(t) W\left(\int_0^t \frac{\sigma(u)^2}{\phi(u)^2}\,\dd u \right)
     = 0
 \]
 almost surely, where $(W_t)_{t\in\RR_+}$ is the standard Wiener process appearing in the second proof of Theorem \ref{Thm_General}.
If $\int_0^T  \frac{\sigma(u)^2}{\phi(u)^2}\,\dd u := \lim_{t\uparrow T} \int_0^t \frac{\sigma(u)^2}{\phi(u)^2}\,\dd u \in\RR_+$,
 then
 \begin{align*}
    \lim_{t\uparrow T} \phi(t) W\left(\int_0^t \frac{\sigma(u)^2}{\phi(u)^2}\,\dd u \right)
      &= \left(\lim_{t\uparrow T} \phi(t) \right) W\left(\int_0^T  \frac{\sigma(u)^2}{\phi(u)^2}\,\dd u \right)\\
      &= 0 \cdot W\left(\int_0^T  \frac{\sigma(u)^2}{\phi(u)^2}\,\dd u \right) = 0
      \qquad \text{a.s.}
 \end{align*}
If $\int_0^T  \frac{\sigma(u)^2}{\phi(u)^2}\,\dd u=\infty$, then
 \[
    \lim_{t\uparrow T} \phi(t) W\left(\int_0^t \frac{\sigma(u)^2}{\phi(u)^2}\,\dd u \right)
     = \lim_{t\uparrow T}
         \phi(t) \left( \int_0^t \frac{\sigma(u)^2}{\phi(u)^2}\,\dd u \right)^{\frac{1}{2}+\varepsilon}
         \frac{W\left(\int_0^t \frac{\sigma(u)^2}{\phi(u)^2}\,\dd u \right)}
               {\left( \int_0^t \frac{\sigma(u)^2}{\phi(u)^2}\,\dd u \right)^{\frac{1}{2}+\varepsilon}}
     =0
 \]
 almost surely, where we used that
 \[
   \lim_{s\to\infty} \frac{W_s}{s^{\frac{1}{2} + \eta}} = 0 \qquad \text{a.s. for all \ $\eta>0$,}
 \]
 which follows by the law of iterated logarithm for a standard Wiener process
 (see, e.g., Revuz and Yor \cite[Chapter II, Corollary 1.12]{RevYor}).
Indeed,
 \begin{align*}
 \frac{W_s}{s^{\frac{1}{2}+\eta}}
   = \frac{\sqrt{2s\ln(\ln(s))}}{s^{\frac{1}{2}+\eta}}
     \frac{W_s}{\sqrt{2s\ln(\ln(s))}}
   = \frac{\sqrt{2\ln(\ln(s))}}{s^\eta}
     \frac{W_s}{\sqrt{2s\ln(\ln(s))}},
   \qquad s>\ee,
 \end{align*}
 where, by the law of iterated logarithm for a standard Wiener process,
 \[
      \left(\frac{W_s}{\sqrt{2s\ln(\ln(s))}}\right)_{s>\ee}
 \]
 is bounded almost surely, and, by $\cL$'Hospital's rule,
 \[
  \lim_{s\to\infty} \frac{\sqrt{2\ln(\ln(s))}}{s^\eta}
   = \lim_{s\to\infty} \frac{1}{\sqrt{2}\eta s^\eta \ln(s) \sqrt{\ln(\ln(s))}}
   = 0.
 \]
\proofend

\section*{Acknowledgements}
\noindent We are grateful to Yakov Nikitin for bringing the papers of Deheuvels \cite{Deh} and Nazarov \cite{Naz} to our attention,
 and for giving a short explanation for the fact that a zero area Wiener bridge cannot be represented as a space-time scaled
 strictly stationary centered Ornstein-Uhlenbeck process, presented in Subsection \ref{Zero area Wiener bridge}.
We are also grateful to Helmut Finner for initiating the counterexample presented in Subsection \ref{subsection_glued}.

\bibliographystyle{plain}

\end{document}